\newenvironment{proof}{\begin{trivlist} \item[]
{\bf Proof.}}{\nolinebreak
\hfill \rule{2mm}{2mm} \end{trivlist}}
\newcounter{ctr}
\newcounter{ctr1}
\newcounter{ctr2}
\newcounter{ctr3}
\newtheorem{definition}{Definition}[section]    %for book style
\newtheorem{theorem}[definition]{Theorem}
\newtheorem{lemma}[definition]{Lemma}
\newtheorem{corollary}[definition]{Corollary}
\newtheorem{proposition}[definition]{Proposition}
\newcommand{\NN}{\hbox{{\sf I}\kern-.15em\hbox{\sf N}}}
\newcommand{\ZZ}{\hbox{{\sf Z}\kern-.77em\hbox{\sf Z}\kern.15em}}
\newcommand{\QQ}{\hbox{{\sf I}\kern-.4em\hbox{\sf Q}}}
\newcommand{\RR}{\hbox{{\sf I}\kern-.15em\hbox{\sf R}}}
\newcommand{\CC}{\hbox{{\sf I}\kern-.4em\hbox{\sf C}}}
\newcommand{\ud}{\, {\rm d} \kern-.015em }
\newcommand{\mod}[1]{\left| \kern.05em #1 \kern.05em \right|}
\newcommand{\norm}[1]{\left\| \kern.05em #1 \kern.05em \right\|}
\newcommand{\inner}[1]{\left\langle \kern.05em #1 \kern.05em \right\rangle }
\newcommand{\pick}[2]{\renewcommand{\arraystretch}{0.6}
\left( \kern-.4em \begin{array}{c} #1 \\ #2 \end{array} \kern-.4em \right) }
\begin{document}

\begin{center} \Large \textbf{The hook fusion procedure for Hecke algebras}\\
\quad \\
\large James Grime \\ \normalsize \emph{Department of Mathematics,
University of York, York, YO10 5DD, UK} \\
\emph{jrg112@york.ac.uk}
\end{center}

\textbf{Abstract}\\ We derive a new expression for the $q$-analogue of the Young symmetrizer which generate irreducible representations of the Hecke algebra. We obtain this new expression using Cherednik's fusion
procedure. However, instead of splitting Young diagrams into their
rows or columns, we consider their principal hooks. This minimises
the number of auxiliary parameters needed in the fusion procedure.

\large

\section{Introduction}

In this article we present a new expression for certain elements in the Hecke algbra, related to the $q$-analogue of the Young symmetrizer, which generate its irreducible representations. We will obtain this new expression using Cherednik's fusion
procedure. This method originates from the work of Jucys
\cite{J}, and has already been used by Nazarov and Tarasov
\cite{N1, N2, NT}. However our approach differs by minimising the
number of auxiliary parameters needed in the fusion procedure.
This is done by considering hooks of Young diagrams, rather than
their rows or columns as in \cite{N1, N2, NT}.

%%%MN

%Let $q$ be a formal parameter. 
Let $H_n$ be the finite 
dimensional Hecke algebra over the
field $\mathbb{C}(q)$ of rational functions in $q$,
with the generators $T_1, \ldots , T_{n-1}$ and the relations 
\begin{equation}\label{Hecke1}
(T_i-q)(T_i+q^{-1})=0;
\end{equation}
\begin{equation}\label{Hecke2}
T_i T_{i+1} T_i=T_{i+1} T_i T_{i+1};
\end{equation}
\begin{equation}\label{Hecke3}
T_i T_j=T_j T_i,
\quad
j\neq i, i+1
\end{equation}
for all possible indices $i$ and $j$.

The generators $T_1, \ldots , T_{n-1}$ are invertible since 
\begin{equation}\label{Tinverse}
T^{-1}_i=T_i-q+q^{-1}
\end{equation}
due to (\ref{Hecke1}). 

%Throughout this article $n$ is a positive integer.
For any index $i=1, \ldots , n-1$ let $\sigma_i=(i, i+1)$ be 
the adjacent transposition in the symmetric group $S_n$. 
Take any element $\sigma \in S_n$
and choose a reduced decomposition $\sigma=\sigma_{i_1}\ldots \sigma_{i_l}$.
As usual put $T_\sigma=T_{i_1}\ldots T_{i_l}$,
this element of the algebra $H_n$ does not depend on the
choice of reduced decomposition of $\sigma$ due to (\ref{Hecke2}) and
(\ref{Hecke3}).  The element of maximal length in
% the symmetric group 
$S_n$ will be denoted by $\sigma_0$. We will write $T_0$ instead of 
$T_{\sigma_0}$ for  short. The elements $T_\sigma$ form a basis of $H_n$ 
as a vector space over the field $\mathbb{C}(q)$. We will also use the 
basis in $H_n$  formed by the elements~$T_\sigma^{-1}$.

%%%%%\MN

A \emph{partition} of $n$ is a sequence of weakly decreasing integers $\lambda_1 \geqslant \lambda_2 \geqslant \cdots \geqslant \lambda_k$ whose sum is equal to $n$. The \emph{Young diagram} of a
partition $\lambda$ is the set of boxes $(i, j) \in \mathbb{Z}^2$
such that $1 \leqslant j \leqslant \lambda_i$. In drawing such
diagrams we let the first coordinate $i$ increase as one goes
downwards, and the second coordinate $j$ increase from left to
right. For example the partition $\lambda = (3,3,2)$ gives the
diagram \[ \yng(3,3,2)\]

If $(i,j)$ is a box in the diagram of $\lambda$, then the
$(i,j)$-\emph{hook} is the set of boxes in $\lambda$
\[  \{ (i, j') : j' \geqslant j \} \cup \{ (i', j)
: i' \geqslant i \}, \] We call the $(i,i)$-hook the
\emph{$i^{\mbox{\small th}}$ principal hook}.

A \emph{standard tableau}, $\Lambda$, is a filling of the diagram
$\lambda$ in which the entries are the numbers 1 to $n$, each
occurring once. If the box $(i,j)$ contains $a$ we define the \emph{content} of the box to be $c_a(\Lambda) = j-i$.

%%%MN

The $\mathbb{C}(q)$-algebra $H_n$ is semisimple; see \cite[Section 4]{GU}
for a short proof of this well known fact. 
The simple ideals of $H_n$ are
labeled by partitions $\lambda$ of $n$, like the equivalence classes of 
irreducible representations of the symmetric group $S_n$.

In this article, for any standard tableau $\Lambda$ of shape $\lambda$ we 
will construct
a certain non-zero element $F_\Lambda\in H_n$. Under left multiplication by
the elements of $H_n$, the left ideal $H_nF_\Lambda\subset H_n$ is an 
irreducible $H_n$-module. The $H_n$-modules $V_\lambda$
for different partitions $\lambda$ are pairwise non-equivalent; see
Corollary \ref{C3.6}. At $q=1$, the algebra $H_n(q)$ specializes
to the group ring $\mathbb{C}S_n$, where $T_\sigma$ becomes the permutation $\sigma \in S_n$.  The $H_n(q)$-module
$V_\lambda$ then specializes to the irreducible representation of 
$S_n$, coresponding to the partition $\lambda$, \cite{Y1}.

Our construction of $V_\lambda$ employs a certain limiting process called 
the \emph{fusion procedure}. The idea of this construction goes 
back to \cite[Section 3]{C2} were no proofs were given however.
The element $F_\Lambda$ is related to the $q$-analogue
of the Young symmetrizer in the group ring $\mathbb{C} S_n$
constructed in \cite{Gy}.

%%%\MN

For each $i=1, \ldots , n-1$ introduce the $H_n$-valued rational function in 
two variables $a \neq 0$, $a \neq b \in\mathbb{C}(q)$
\begin{equation}\label{q-smallf}
F_i(a, b)=T_i+\frac{q-q^{-1}}{a^{-1}b - 1}.
\end{equation}

Now introduce $n$ variables $z_1, \ldots , z_n\in\mathbb{C}(q)$.
Equip the set of all pairs $(i, j)$
where $1\leqslant i<j\leqslant n$, with the following ordering.
The pair $(i, j)$ precedes another pair
$(i',j')$ % from the same set
if $j<j'$, or if $j=j'$ but $i<i'$. Call this the \emph{reverse-lexicographic} ordering.
Take the ordered product
\begin{equation}\label{q-bigf}
F_\Lambda(z_1, \ldots , z_n) = \prod_{(i,j)}^{\rightarrow}\ F_{j-i}
\bigl( q^{2c_i(\Lambda)}z_i, q^{2c_j(\Lambda)}z_j\bigr)
\end{equation}
over this set. Consider the product (\ref{q-bigf})
as a rational function taking values in $H_n$,
of the variables $z_1, \ldots , z_n$.
If $i$ and $j$ sit on
the same diagonal in the tableau $\Lambda$, then $F_{j-i}
\bigl( q^{2c_i(\Lambda)}z_i, q^{2c_j(\Lambda)}z_j\bigr)$ has a pole at $z_i = z_j \neq 0$.

Let $\mathcal{R}_\Lambda$ be the vector subspace in $\mathbb{C}(q)^n$
consisting of all tuples $(z_1, \dots , z_n)$ such that $z_i =
z_j$ whenever the numbers $i$ and $j$ appear in the same row of
the tableau $\Lambda$.

As a direct calculation using (\ref{Hecke1}) and (\ref{Hecke2}) shows,
these functions satisfy 
\begin{equation}\label{q-triple}
F_i(a, b)F_{i+1}(a, c)F_i(b, c)=
F_{i+1}(b, c)F_i(a, c)F_{i+1}(a, b).
\end{equation}
Due to (\ref{Hecke3}) these rational functions also satisfy the relations
\begin{equation}\label{q-commute}
F_i(a, b) F_j(c, d)=F_j(c, d) F_i(a, b);
\qquad
j\neq i, i+1.
\end{equation}

Using (\ref{q-triple}) and (\ref{q-commute}) we may reorder the
product $F_\Lambda(z_1, \dots , z_n)$ such that each singularity
is contained in an expression known to be regular at $z_1 = z_2 =
\dots = z_n \neq 0$, \cite{N2}. It is by this method that it was shown
that the restriction of the rational function $F_\Lambda(z_1,
\dots ,z_n)$ to the subspace $\mathcal{R}_\Lambda$ is regular at
$z_1 = z_2 = \dots = z_n \neq 0$. Furthermore the following theorem was proved;

\begin{theorem} \label{q-MNtheorem} Restriction to $\mathcal{R}_\Lambda$ of the
rational function $F_\Lambda(z_1, \dots , z_n)$ is regular at $z_1
= z_2 = \dots = z_n \neq 0$ and has value $F_\Lambda \in H_n$. The left ideal generated by this element is irreducible, and the $H_n$-modules for different partitions $\lambda$ are pairwise non-equivalent.
\end{theorem}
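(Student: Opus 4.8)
The plan is to prove the three assertions separately: regularity together with the value, irreducibility of the left ideal, and inequivalence across partitions. For the regularity I would first locate the singularities. As $F_i(a,b)$ is singular only on $a^{-1}b=1$, the factor $F_{j-i}\bigl(q^{2c_i(\Lambda)}z_i,q^{2c_j(\Lambda)}z_j\bigr)$ of (\ref{q-bigf}) can fail to be regular at $z_1=\dots=z_n\neq0$ only when $c_i(\Lambda)=c_j(\Lambda)$, that is, when the boxes holding $i$ and $j$ lie on a common diagonal of $\lambda$. Two such boxes share neither a row nor a column, so the equations cutting out $\mathcal{R}_\Lambda$ do not set $z_i=z_j$ outright; the poles must instead be made to cancel. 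Following the method indicated before the statement, I would use the Yang--Baxter relation (\ref{q-triple}) and the commutativity (\ref{q-commute}) to transport each singular factor next to the factors attached to the two boxes completing the elementary $2\times2$ square that it spans (the boxes sharing a row with one of them and a column with the other).

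The local input is the pair of degenerations $F_i(a,q^{2}a)=T_i+q^{-1}$ and $F_i(a,q^{-2}a)=T_i-q$, the relation $(T_i-q)(T_i+q^{-1})=0$ from (\ref{Hecke1}), and the unitarity identity $F_i(a,b)F_i(b,a)=\frac{(a^{-1}b-q^{2})(a^{-1}b-q^{-2})}{(a^{-1}b-1)^{2}}$, whose numerator supplies, on $\mathcal{R}_\Lambda$, a simple zero cancelling the simple pole carried by the singular factor. I would run this as an induction on the shape $\lambda$, taking the single-row and single-column diagrams as base cases: there the contents are pairwise distinct, no singular factor occurs, and the restricted product is manifestly regular, specialising in the limit to a $q$-symmetriser, respectively a $q$-antisymmetriser. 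Propagating regularity from these blocks to all of $\lambda$ through the braid moves gives regularity of $F_\Lambda(z_1,\dots,z_n)$ on $\mathcal{R}_\Lambda$ at $z_1=\dots=z_n$, the limit being the promised element $F_\Lambda\in H_n$. I expect this bookkeeping --- checking that no application of (\ref{q-triple}) reintroduces a pole and that the cancellations compose consistently across overlapping squares --- to be the main obstacle.

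That $F_\Lambda\neq0$ I would see at no cost: each factor of (\ref{q-bigf}) is $T_{j-i}$ plus a scalar, the product of the leading terms over the reverse-lexicographic order is a reduced word for $\sigma_0$, namely $T_0$, while every other term is a product of fewer than $n(n-1)/2$ generators, hence a combination of $T_\sigma$ with $\ell(\sigma)<\ell(\sigma_0)$. Thus the coefficient of $T_0$ in $F_\Lambda(z_1,\dots,z_n)$ equals $1$ identically, and it survives the regular limit, so $F_\Lambda\neq0$. For irreducibility of $H_nF_\Lambda$ I would prove $F_\Lambda H_nF_\Lambda=\mathbb{C}(q)\,F_\Lambda$; since $H_n$ is split semisimple over $\mathbb{C}(q)$, a nonzero left ideal whose endomorphism algebra $\cong F_\Lambda H_nF_\Lambda$ is one-dimensional is automatically simple. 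This reduces to showing $F_\Lambda T_\sigma F_\Lambda\in\mathbb{C}(q)\,F_\Lambda$ for all $\sigma\in S_n$, which follows from the absorption identities $T_iF_\Lambda=qF_\Lambda$ when $i,i+1$ occupy the same row of $\Lambda$ and $T_iF_\Lambda=-q^{-1}F_\Lambda$ when they occupy the same column, both read off from the degenerations above.

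Finally, for inequivalence I would use that $H_n$ is semisimple with simple components labelled by the partitions of $n$. The central character of $H_nF_\Lambda$, equivalently the joint spectrum of the Jucys--Murphy elements on $F_\Lambda$, is determined by the multiset of contents $\{c_a(\Lambda)\}$, which recovers $\lambda$; alternatively one specialises at $q=1$ and recognises the Young symmetriser of \cite{Gy, Y1}. Either way $F_\Lambda$ lies in the $\lambda$-component, so modules attached to distinct partitions cannot be isomorphic, and a count of standard tableaux against the dimension of the simple $H_n$-module of shape $\lambda$ shows these ideals exhaust that component, as recorded in Corollary \ref{C3.6}.
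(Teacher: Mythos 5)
First, a point of comparison: the paper never proves Theorem \ref{q-MNtheorem} itself --- it is quoted from \cite{N1,N2} --- and what the paper proves in detail is the hook analogue, Theorem \ref{q-fulltheorem}, by exactly the strategy you outline: use (\ref{q-triple}) and (\ref{q-commute}) to reorder the product so that every singular factor sits inside a manifestly regular expression. Several of your ingredients are correct: the location of the singularities at equal contents, the degenerations $F_i(a,q^{2}a)=T_i+q^{-1}$ and $F_i(a,q^{-2}a)=T_i-q$, your unitarity identity (which is (\ref{q-inverse}) rewritten), and the $T_0$-coefficient argument for $F_\Lambda\neq 0$, which is essentially Proposition \ref{T_0coeff}. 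However, your regularity argument has the cancellation mechanism wrong and defers the actual substance. The unitarity identity cannot cancel the pole of a singular factor: by (\ref{q-inverse}) the product $F_i(a,b)F_i(b,a)$ is a \emph{scalar} with a \emph{double} pole at $a=b$ (its numerator vanishes only when the contents differ by exactly one), and in any case the ordered product (\ref{q-bigf}) never contains both orderings of a pair. What actually kills the pole is the triple phenomenon of Lemma \ref{q-regular}: (idempotent)$\,\times\,$(singularity)$\,\times\,$(triple term) is regular, a computation resting on (\ref{Hecke1}); unitarity is used for the different purpose of inverting factors whose contents differ by at least two. Moreover the ``bookkeeping'' you set aside as the main obstacle \emph{is} the proof: one must exhibit an explicit ordering in which each singularity stands immediately to the left of its triple term, with the preceding factors divisible on the right by the factor supplying the idempotent --- this is what the products $C_i$, $R_i$ built from a distinguished tableau accomplish in Proposition \ref{q-jimtheorem1}. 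Your proposed induction on the shape, with single rows and columns as base cases, is not set up: no inductive step is described, and nothing is said about why braid moves propagate regularity between blocks.

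The second genuine gap is in irreducibility. You assert that $F_\Lambda T_\sigma F_\Lambda\in\mathbb{C}(q)F_\Lambda$ ``follows from the absorption identities.'' Those identities (equivalently Corollary \ref{q-divisibilitycorollary}, after converting left divisibility by $T_k-q$ or $T_k+q^{-1}$ into eigenvalue relations) control $T_iF_\Lambda$ only for those $i$ such that $i$ and $i+1$ are adjacent in a row or column of $\Lambda$; for a general $\sigma$ there is no evident way to reduce $F_\Lambda T_\sigma F_\Lambda$ to such relations. For the classical Young symmetrizer the analogous sandwich property requires von Neumann's combinatorial lemma on the row and column groups, and $F_\Lambda$ is not of that product form --- it is a seminormal-type quasi-idempotent, for which the known proofs of irreducibility and pairwise non-equivalence proceed either through the Jucys--Murphy eigenvalues of $F_\Lambda$ or through showing that the elements $G_\Lambda$ form a basis of the left ideal; this is the content of \cite[Section 3]{N1}, which the paper itself quotes as Theorem \ref{C3.6} rather than reproving. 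Your inequivalence argument inherits the same gap one level down: it presupposes that the Jucys--Murphy elements act on $F_\Lambda$ by the contents of $\Lambda$, or that specialization at $q=1$ is legitimate (which itself requires regularity of the coefficients of $F_\Lambda$ at $q=1$, a fact the paper extracts from the proof of Proposition \ref{q-jimtheorem1}); neither is established in your proposal.
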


Similarly, we may form another expression for $F_\Lambda$ by
considering the subspace in $\mathbb{C}(q)^n$ consisting of all
tuples $(z_1, \dots , z_n)$ such that $z_i = z_j$ whenever the
numbers $i$ and $j$ appear in the same column of the tableau
$\Lambda$ \cite{N1}.

In this article we present a new expression for the element $F_\Lambda$ which minimises the number of auxiliary parameters
needed in the fusion procedure. We do this by considering hooks of
standard tableaux rather than their rows or columns.

Let $\mathcal{H}_\Lambda$ be the vector subspace in $\mathbb{C}(q)^n$
consisting of all tuples $(z_1, \dots , z_n)$ such that $z_i =
z_j$ whenever the numbers $i$ and $j$ appear in the same principal
hook of the tableau $\Lambda$. We will prove the following
theorem.

\begin{theorem}\label{q-fulltheorem} Restriction to $\mathcal{H}_\Lambda$ of the
rational function $F_\Lambda(z_1, \dots , z_n)$ is regular at $z_1
= z_2 = \dots = z_n \neq 0$ and has value $F_\Lambda \in H_n$. The left ideal generated by this element is irreducible, and the $H_n$-modules for different partitions $\lambda$ are pairwise non-equivalent. The element is the same as the element in Theorem \ref{q-MNtheorem}.
\end{theorem}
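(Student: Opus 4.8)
The plan is to reduce the statement to two assertions: that the restriction of $F_\Lambda(z_1,\dots,z_n)$ to $\mathcal{H}_\Lambda$ is regular at $z_1=\dots=z_n\neq 0$, and that its value there equals the element $F_\Lambda$ produced by Theorem \ref{q-MNtheorem}. Once these are established, the remaining assertions — that $H_nF_\Lambda$ is irreducible and that distinct $\lambda$ give pairwise non-equivalent modules — are inherited verbatim from Theorem \ref{q-MNtheorem}, precisely because the element is literally the same. So the entire content of the theorem beyond what is already known is regularity together with the identification of the limiting value.

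First I would locate the singularities. The factor $F_{j-i}\bigl(q^{2c_i(\Lambda)}z_i,q^{2c_j(\Lambda)}z_j\bigr)$ has a pole on $z_i=z_j$ exactly when $c_i(\Lambda)=c_j(\Lambda)$. The arm of the $k$th principal hook carries contents $0,1,2,\dots$ and its leg carries contents $0,-1,-2,\dots$, so all contents within a single principal hook are distinct. Hence on $\mathcal{H}_\Lambda$ every factor indexed by a pair of boxes of the same hook is already regular at the diagonal, and the only possible poles come from pairs of equal content lying in \emph{different} principal hooks. This is exactly the feature that makes the hook parametrisation economical: each diagonal meets each principal hook at most once, so the equal-content classes are transversal to the hooks.

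The regularity itself I would prove by Nazarov's reordering method, now organised by principal hooks rather than by rows. Using (\ref{q-triple}) and (\ref{q-commute}), which hold identically as rational-function identities, I would rewrite $F_\Lambda(z_1,\dots,z_n)$ so that each singular cross-hook factor is enclosed in a sub-product that is manifestly regular at the diagonal. The natural induction is on the number $d$ of principal hooks: peel off the first principal hook, regard the remaining boxes as a shifted diagram with $d-1$ principal hooks to which the inductive hypothesis applies, and show that the factors coupling the first hook to the rest can be moved, via the braid and commutation relations, into blocks whose singularities cancel in the limit. The base case is a hook-shaped $\lambda$, where $\mathcal{H}_\Lambda$ is already the full diagonal, all contents are distinct, and no pole occurs at all.

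The main obstacle is the second assertion, that the limiting value is \emph{exactly} the element of Theorem \ref{q-MNtheorem} and not merely a generator of an equivalent module. I would treat this within the same reordering: after the product has been rearranged into regular blocks I would take the limit $z_1=\dots=z_n=z$ block by block and identify the outcome with the $q$-Young symmetrizer. Here the mechanism becomes visible, since the factors for adjacent boxes of a hook specialise to $T_i+q^{-1}$ along an arm and to $T_i-q$ along a leg, the building blocks of symmetrisation along rows and antisymmetrisation along columns. The delicate point is the bookkeeping: one must check that the rearrangement needed to isolate the cross-hook singularities yields, in the limit, the same ordered product of local factors as the row reordering does, so that the two limits coincide as elements of $H_n$ rather than only up to scalar or up to equivalence. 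I expect the reverse-lexicographic ordering, being consistent simultaneously with the row order along each arm and the column order along each leg, to be what forces this exact agreement.
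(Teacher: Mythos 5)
Your reduction of the theorem to (i) regularity along $\mathcal{H}_\Lambda$ and (ii) identification of the limit with the element of Theorem \ref{q-MNtheorem} is the right logical skeleton, and your regularity sketch is essentially the paper's own argument (Proposition \ref{q-jimtheorem1}): reduce to the hook tableau $\Lambda^\circ$ by adjacent transpositions, reorder the product hook by hook into the blocks $C_iR_i$, and enclose each cross-hook singularity in a triple whose regularity is guaranteed by Lemma \ref{q-regular}. You locate the singularities correctly (equal-content pairs necessarily lie in different principal hooks), though your sketch omits the two ingredients that make the reordering argument close: the triple-regularity lemma itself, and the divisibility property of the partial products that justifies inserting the idempotents $P^{\pm}$ without changing the value.

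The genuine gap is in (ii). You propose to prove that the hook-fusion value equals the row-fusion value by taking both limits ``block by block'' and checking that the two rearrangements yield the same ordered product of local factors. But the two values are limits of the same rational function along \emph{different} subspaces, $\mathcal{R}_\Lambda$ and $\mathcal{H}_\Lambda$, through a point at which the function is singular in the ambient space $\mathbb{C}(q)^n$; limits along transversal directions of a singular function need not agree, and each rearrangement (with its inserted idempotents) is an identity of functions valid only on its own subspace, so ``the same ordered product'' cannot be compared factor by factor. Nor can you mediate through a common subspace: already for $\lambda=(2,2)$ with the hook tableau, $\mathcal{R}_{\Lambda^\circ}\cap\mathcal{H}_{\Lambda^\circ}$ is exactly the diagonal line $z_1=\cdots=z_4$, which lies inside the polar hyperplane $z_1=z_4$ of the singular factor coming from the two boxes of content $0$. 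The paper never compares the two limits directly. Instead it proves that the hook-fusion element has coefficient $1$ at $T_0$ (Proposition \ref{T_0coeff}), that $F_\Lambda T_0^{-1}$ is $\varphi$-invariant (Proposition \ref{q-varphi}), and that $F_\Lambda$ is divisible on the left by $T_k-q$ when $k$ and $k+1$ stand in the same column of $\Lambda$, and by $T_k+q^{-1}$ when they stand in the same row (Propositions \ref{q-jimtheorem2}, \ref{q-jimtheorem3}, Lemma \ref{q-divisibilitybyadjacenttransposition}, Corollary \ref{q-divisibilitycorollary}). By the results of \cite[Section 3]{N1} these properties characterise the element uniquely and also give irreducibility of the left ideal and pairwise non-equivalence of the modules (Theorem \ref{C3.6}); since the row-fusion element of Theorem \ref{q-MNtheorem} satisfies the same properties, the identification follows. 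Without this characterisation step, or some substitute for it, your argument does not establish the final assertion of the theorem.
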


In particular, this hook fusion procedure can be used to form
irreducible representations of $H_n$ corresponding to Young
diagrams of hook shape using only one auxiliary parameter, $z$. By
taking this parameter to be 1 we find that no parameters are
needed for diagrams of hook shape. Therefore if $\nu$ is a
partition of hook shape, and $N$ a standard tableau of shape $\nu$, we have
\begin{equation}\label{q-bigfhook} F_N = F_N(z) =
\prod_{(p,q)}^\rightarrow F_{j-i}(q^{2c_i(\Lambda)}, q^{c_j(\Lambda)}), \end{equation} with
the pairs $(i, j)$ in the product ordered reverse-lexicographically.

%%%%%%%%%%%%%%%%%%%%%%%%%%55

To motivate the study of modules corresponding to partitions of
hook shape first let us consider  the \emph{Jacobi-Trudi
identities} \cite[Chapter I3]{MD}. There is an isomorphism from the ring of symmetric functions to the Grothendieck ring of representations of the Hecke algebra. Therefore we can think of the following
identities as dual to the Jacobi-Trudi identities.

If $\lambda = (\lambda_1, \dots , \lambda_k)$ such that $n =
\lambda_1 + \dots + \lambda_k$ then we have the following
decomposition of the induced representation of the tensor product
of modules corresponding to the rows of $\lambda$;
\[ \textrm{Ind}_{H_{\lambda_1} \times H_{\lambda_2} \times \cdots
\times H_{\lambda_k}}^{H_n } V_{(\lambda_1)} \otimes
V_{(\lambda_2)} \otimes \cdots \otimes V_{(\lambda_k)} \cong
\bigoplus_{\mu} (V_\mu)^{\oplus K_{\mu \lambda}},\] where the sum
is over all partitions of $n$. Note that $V_{(\lambda_i)}$ is the
trivial representation of $H_{\lambda_i}$, that sends generators $T_i$ to $q$. The coefficients
$K_{\mu \lambda}$ are non-negative integers known as \emph{Kostka
numbers}, \cite{MD}. Importantly, we have $K_{\lambda \lambda} =
1.$

On the subspace $\mathcal{R}_\Lambda$, if $z_i / z_j \notin
q^\mathbb{Z}$ when $i$ and $j$ are in different rows of $\Lambda$
then the above induced module may be realised as the left ideal in
$H_n$ generated by $F_\Lambda(z_1, \dots, z_n)$.\\
The irreducible representation $V_\lambda$ appears in the
decomposition of this induced module with coefficient 1, and is
the ideal of $H_n$ generated by $F_\Lambda(z_1, \dots ,
z_n)$ when $z_1 = z_2 = \dots = z_n \neq 0$. The fusion procedure of
Theorem \ref{q-MNtheorem} provides a way of singling out this
irreducible component.

Similarly we have the equivalent identity for columns, \[
\textrm{Ind}_{H_{\lambda'_1} \times H_{\lambda'_2} \times \cdots
\times H_{\lambda'_l}}^{H_n } V_{(1^{\lambda'_1})} \otimes
V_{(1^{\lambda'_2})} \otimes \cdots \otimes V_{(1^{\lambda'_l})}
\cong \bigoplus_{\mu} (V_{\mu})^{\oplus K_{\mu' \lambda'}},\]
where $l$ is the number of columns of $\lambda$. In this case
$V_{(1^{\lambda'_i})}$ is the alternating representation of
$H_{\lambda'_i}$. This induced module is isomorphic to the left
ideal of $H_n$ generated by $F_{\Lambda}(z_1, \dots,
z_n)$ considered on the subspace $\mathcal{R}_{\Lambda'}$, with
$z_i / z_j \notin q^\mathbb{Z}$ when $i, j$ are in different columns
of $\Lambda$. Again the irreducible representation $V_{\lambda}$
appears in the decomposition of this induced module with
coefficient 1, and is the ideal of $H_n$ generated by
$F_{\Lambda}(z_1, \dots, z_n)$ when $z_1 = z_2 = \dots = z_n \neq 0$.

There is another expression known as the \emph{Giambelli identity}
\cite{Gi}. Unlike the Jacobi-Trudi identities, this identity
involves splitting $\lambda$ into its principal hooks, rather than
its rows or columns. A combinatorial proof of the Giambelli
identity can be found in \cite{ER}.

Divide a Young diagram $\lambda$ into boxes with positive and
non-positive content. We may illustrate this on the Young diagram
by drawing 'steps' above the main diagonal. Denote the boxes above
the steps by $\alpha(\lambda)$ and the rest by $\beta(\lambda)$.
For example, the following figure illustrates $\lambda$,
$\alpha(\lambda)$ and $\beta(\lambda)$ for $\lambda = (3,3,2)$.

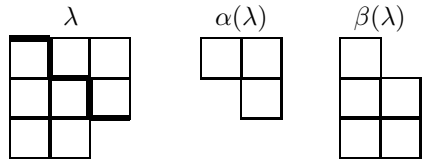
\begin{figure}[h]
\label{steps}
\begin{center}
\begin{picture}(150, 60)
\put(0,30){\framebox(15,15)[r]{  }}
\put(15,30){\framebox(15,15)[r]{ }}
\put(30,30){\framebox(15,15)[r]{  }}

\put(0,15){\framebox(15,15)[r]{  }}
\put(15,15){\framebox(15,15)[r]{  }}
\put(30,15){\framebox(15,15)[r]{  }}

\put(0,0){\framebox(15,15)[r]{  }}
\put(15,0){\framebox(15,15)[r]{}}

\put(72.5,30){\framebox(15,15)[r]{  }}
\put(87.5,30){\framebox(15,15)[r]{ }}
\put(125,30){\framebox(15,15)[r]{ }}

\put(87.5,15){\framebox(15,15)[r]{  }}
\put(125,15){\framebox(15,15)[r]{  }}
\put(140,15){\framebox(15,15)[r]{  }}

\put(125,0){\framebox(15,15)[r]{ }}
\put(140,0){\framebox(15,15)[r]{}}

\put(20,50){$\lambda$} \put(77.5,50){$\alpha(\lambda)$}
\put(130,50){$\beta(\lambda)$}

\linethickness{2pt} \put(0,45){\line(1,0){15}}
\put(15,30){\line(1,0){15}} \put(30,15){\line(1,0){15}}

\put(15,30){\line(0,1){15}} \put(30,15){\line(0,1){15}}

\end{picture}
\end{center}
 \caption{The Young diagram $(3,3,2)$ divided into boxes with positive content and non-positive content}
 \end{figure}

If we denote the rows of $\alpha(\lambda)$ by $\alpha_1 > \alpha_2
> \dots > \alpha_d > 0$ and the columns of $\beta(\lambda)$
by $\beta_1 > \beta_2 > \dots > \beta_d > 0$, then we have the
following alternative notation for $\lambda$;
\[ \lambda = ( \alpha | \beta ), \]
where $\alpha = (\alpha_1, \dots , \alpha_d)$ and $\beta =
(\beta_1, \dots , \beta_d)$.

Here $d$ denotes the length of the side of the \emph{Durfee
square} of shape $\lambda$, which is the set of boxes
corresponding to the largest square that fits inside $\lambda$,
and is equal to the number of principal hooks in $\lambda$. In our
example $d=2$ and $\lambda = (2, 1 | 3, 2)$.

We may consider the following identity as a dual of the Giambelli
identity.
\[ \textrm{Ind}_{H_{h_1} \times H_{h_2} \times \cdots
\times H_{h_d}}^{H_n} V_{(\alpha_1 | \beta_1)} \otimes
V_{(\alpha_2 | \beta_2)} \otimes \cdots \otimes V_{(\alpha_d |
\beta_d)} \cong \bigoplus_{\mu} (V_{\mu})^{\oplus D_{\mu
\lambda}},\] where $h_i$ is the length of the $i^{\mbox{\small th}}$
principal hook, and the sum is over all partitions of $n$. This is
a decomposition of the induced representation of the tensor
product of modules of hook shape. Further these hooks are the
principal hooks of $\lambda$. The coefficients, $D_{\mu \lambda}$,
are non-negative integers, and in particular $D_{\lambda \lambda}
=1$.

On the subspace $\mathcal{H}_\Lambda$, if $z_i / z_j \notin
q^\mathbb{Z}$ when $i$ and $j$ are in different principal hooks of
$\Lambda$ then the above induced module may be realised as the
left ideal in
$H_n$ generated by $F_\Lambda(z_1, \dots, z_n)$.\\
The irreducible representation $V_\lambda$ appears in the
decomposition of this induced module with coefficient 1, and is
the ideal of $H_n$ generated by $F_\Lambda(z_1, \dots ,
z_n)$ when $z_1 = z_2 = \dots = z_n$.

Hence, in this way,  our hook fusion procedure relates to the
Giambelli identity in the same way that Cherednik's original
fusion procedure relates to the Jacobi-Trudi identity. Namely, it
provides a way of singling out the irreducible component
$V_\lambda$ from the above induced module.

The fusion procedure was originally developed in the study of
affine Hecke algebras, \cite{C1}. Our results may be regarded as an
application of the representation theory of these algebras,
\cite{OV}. Descriptions of the fusion procedure for the Symmetric group may be found in \cite{NT} and \cite{GP}. The hook fusion procedure for the Symmetric group was considered in \cite{Gr}.

%%%%%%%%%%%%%%%%%%%%%%%%%555

Acknowledgements and thanks go to Maxim Nazarov for his
supervision, and for introducing me to this subject. I would also
like to thank EPSRC for funding my research.

\section{Fusion Procedure for a Young Diagram}

%The diagonal matrix element $F_\Lambda$ determines the irreducible
%module $V_\lambda$ of $S_n$, up to isomorphism, for any tableau
%$\Lambda$ of shape $\lambda$. Therefore in the sequel we will only
%use one particular tableau, the \emph{hook tableau}. In which case
%we may denote the diagonal matrix element $F_\Lambda$ by
%$F_\lambda$, and the space $\mathcal{H}_\Lambda$ by
%$\mathcal{H}_\lambda$.

We fill a diagram $\lambda$ by hooks to form a tableau $\Lambda^\circ$
in the following way: For the first principal hook we fill the
column with entries $1$, $2$, \dots , $r$ and then fill the row
with entries $r+1$, $r+2$, \dots , $s$. We then fill the column of
the second principal hook with $s+1$, $s+2$, \dots , $t$ and fill
the row with $t+1$, $t+2$, \dots , $x$. Continuing in this way we
form the hook tableau.

{\addtocounter{definition}{1} \bf Example \thedefinition .} On the
left is the hook tableau of the diagram $\lambda = (3,3,2)$, and
on the right the same diagram with the content of each box.

\begin{normalsize}

\begin{center}
\begin{picture}(50,50)
\put(0,30){\framebox(15,15)[r]{ 1 }}
\put(15,30){\framebox(15,15)[r]{ 4 }}
\put(30,30){\framebox(15,15)[r]{ 5 }}
\put(0,15){\framebox(15,15)[r]{ 2 }}
\put(15,15){\framebox(15,15)[r]{ 6 }}
\put(30,15){\framebox(15,15)[r]{ 8 }}
\put(0,0){\framebox(15,15)[r]{ 3 }}
\put(15,0){\framebox(15,15)[r]{ 7 }}
\end{picture}
\qquad \qquad \qquad
\begin{picture}(50,50)
\put(0,30){\framebox(15,15)[r]{ 0 }}
\put(15,30){\framebox(15,15)[r]{ 1 }}
\put(30,30){\framebox(15,15)[r]{ 2 }}
\put(0,15){\framebox(15,15)[r]{ -1 }}
\put(15,15){\framebox(15,15)[r]{ 0 }}
\put(30,15){\framebox(15,15)[r]{ 1 }}
\put(0,0){\framebox(15,15)[r]{ -2 }}
\put(15,0){\framebox(15,15)[r]{ -1 }}
\end{picture}
\end{center}
\end{normalsize}
Therefore the sequence $(c_1(\Lambda^\circ), c_2(\Lambda^\circ), \dots , c_8(\Lambda^\circ))$ is given by $(0,
-1, -2, 1, 2, 0 , -1, 1)$. {\nolinebreak \hfill \rule{2mm}{2mm}

Consider (\ref{q-bigf}) as a rational function of the variables
$z_1, \dots , z_n$ with values in $H_n$. Using the substitution \begin{equation}\label{substitution} w_i = q^{c_i(\Lambda)}z_i, \end{equation} the factor
$F_{i}(w_a, w_b)$ has a pole at $z_a = z_b$ if and
only if the numbers $a$ and $b$ stand on the same diagonal of a
tableau $\Lambda$. We then call the pair $(a, b)$ a
\emph{singularity}. And we call the corresponding term $F_{i}(w_a,w_b)$ a \emph{singularity term}, or
simply a singularity.

Let $a$ and $b$ be in the same principal hook of $\Lambda$. If $a$ and $b$
are next to one another in the column of the hook then, on
$\mathcal{H}_\lambda$, $F_{i}(w_a,w_b) = T_{i} - q$. Since \begin{equation}\label{P-}(T_i - q)^2 = -(q+q^{-1})(T_i-q)\end{equation}
then $\frac{-1}{q+q^{-1}} F_{i}(w_a,w_b)$ is an
idempotent. Denote this idempotent $P^-_i$.

Similarly, if $a$ and $b$ are next to one another in
the same row of the hook then $F_{i}(w_a,w_b)= T_{i} +
q^{-1}$. And since \begin{equation}\label{P+}(T_i + q^{-1})^2 = (q+q^{-1})(T_i+q^{-1})\end{equation} then
$\frac{1}{q+q^{-1}} F_{i}(w_a,w_b)$ is an idempotent. Denote this idempotent $P^+_i$.\\

We also have
\begin{equation}\label{q-inverse} F_i(a, b) F_i(b, a)=1-\frac{(q-q^{-1})^2 ab}{(a-b)^2}.
\end{equation}
Therefore, if the contents $c_a(\Lambda)$ and $c_b(\Lambda)$ differ by a number
greater than one, then the factor $F_{i}(w_a,w_b)$
is invertible in $H_n$ when $z_a = z_b \neq 0$ for all values of $q$.

The presence of singularity terms in the product $F_\Lambda(z_1,
\dots , z_n)$ mean this product may or may not be regular on the
vector subspace of $\mathcal{H}_\lambda$ consisting of all tuples
$(z_1, \dots , z_n)$ such that $z_1 = z_2 = \dots = z_n \neq 0$. Using
the following lemma, we will be able to show that $F_\Lambda(z_1,
\dots , z_n)$ is indeed regular on this subspace.

\begin{lemma}\label{q-regular} Restriction of the rational function $F_i(a, b)F_{i+1}(a, c)F_i(b, c)$ to the set of
$(a, b, c)$ such that $a=q^{\pm 2}b$, 
is regular at $a = c \neq 0$. \end{lemma}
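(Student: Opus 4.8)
The plan is to isolate the single factor responsible for a pole at $a=c$ and to show that its residue is annihilated by the neighbouring factors once we restrict to $a=q^{\pm2}b$.

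First I would locate the singularities on the restricted set. Among the three factors, $F_i(a,b)$ is regular because $a=q^{\pm2}b\neq b$; a short computation in fact gives $F_i(a,b)=T_i-q$ when $a=q^{2}b$ and $F_i(a,b)=T_i+q^{-1}$ when $a=q^{-2}b$. The factor $F_i(b,c)$ has its only pole at $c=b$, and since $b=q^{\mp2}a\neq a$ this factor is regular at $c=a$. Thus the only factor that is singular at $a=c$ is $F_{i+1}(a,c)=T_{i+1}+\frac{q-q^{-1}}{a^{-1}c-1}$, which has a simple pole there.

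Next I would split off the singular part. Writing
\[ F_i(a,b)F_{i+1}(a,c)F_i(b,c)=F_i(a,b)\,T_{i+1}\,F_i(b,c)+\frac{q-q^{-1}}{a^{-1}c-1}\,F_i(a,b)F_i(b,c), \]
the first summand is manifestly regular at $c=a$. For the second, set $g(c)=F_i(a,b)F_i(b,c)$; this is regular at $c=a$, so its singular contribution is controlled by its value $g(a)=F_i(a,b)F_i(b,a)$. Since $\frac{1}{a^{-1}c-1}=\frac{a}{c-a}$ has a simple pole with residue $a$ at $c=a$, the second summand is regular precisely when $g(a)=0$, in which case its value there is $(q-q^{-1})\,a\,g'(a)$.

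The crux is therefore to show $F_i(a,b)F_i(b,a)=0$ on the set $a=q^{\pm2}b$. Here I would invoke the identity (\ref{q-inverse}), which expresses $F_i(a,b)F_i(b,a)=1-\frac{(q-q^{-1})^2 ab}{(a-b)^2}$ as a scalar multiple of the identity. The vanishing condition $\frac{(q-q^{-1})^2 ab}{(a-b)^2}=1$ rearranges to $a^2-(q^2+q^{-2})ab+b^2=0$, that is $(a-q^{2}b)(a-q^{-2}b)=0$, which is exactly the defining equation of our restricted set. Hence $g(a)=0$ there, the pole of the second summand cancels, and the whole product is regular at $a=c\neq0$. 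I expect the only delicate point to be the bookkeeping in this last step, namely verifying that the scalar in (\ref{q-inverse}) factors as $(a-q^{2}b)(a-q^{-2}b)/(a-b)^2$; this is a routine algebraic identity, and as a cross-check one may instead obtain $g(a)=0$ directly from (\ref{Hecke1}) using the two explicit forms above, since in both cases $g(a)$ becomes $(T_i-q)(T_i+q^{-1})=0$.
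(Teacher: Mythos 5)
Your proposal is correct and takes essentially the same route as the paper: both expand the middle factor $F_{i+1}(a,c)$ via (\ref{q-smallf}) into the regular summand $F_i(a,b)\,T_{i+1}\,F_i(b,c)$ plus the pole term $\frac{q-q^{-1}}{a^{-1}c-1}F_i(a,b)F_i(b,c)$, and then cancel the simple pole at $c=a$. The only difference is presentational: the paper substitutes $b=q^{\mp2}a$ and computes the second summand into a manifestly regular closed form $\frac{q-q^{-1}}{a^{-1}c-q^{\mp2}}\,(q^{\pm1}\mp T_i)$, whereas you establish the same cancellation by checking $F_i(a,b)F_i(b,a)=0$ on the restricted set, either through the factorization of (\ref{q-inverse}) as $(a-q^2b)(a-q^{-2}b)/(a-b)^2$ or directly as $(T_i-q)(T_i+q^{-1})=0$ from (\ref{Hecke1}).
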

\begin{proof} Let us expand the product at the left hand side of (\ref{q-triple}) in the
factor
$F_{i+1}(a, c)$. By the definition (\ref{q-smallf}) we will get the sum
\[
F_i(a, b)T_{i+1}F_i(b, c)+
\frac{q-q^{-1}}{a^{-1}c-1}F_i(a, b)F_i(b, c).
\]
Here the restriction to $a=q^{\pm 2}b$ of the first summand is
evidently regular at $a=c$. After the substitution 
$b=q^{\mp2}a$, the second summand takes the form
\[
\frac{q-q^{-1}}{a^{-1}c-1}
\bigl( T_i\mp q^{\pm1}\bigr)
\biggl(
T_i+\frac{q-q^{-1}}{q^{\pm2}a^{-1} c-1}
\biggr)
=
\frac{q-q^{-1}}{a^{-1} c-q^{\mp2}}
( q^{\pm1}\mp T_i).
\]
The rational function of $a, c$ at the right hand side
of the last displayed equality is also evidently regular at $a=c$. \end{proof}

In particular, if the middle term on the left hand side of (\ref{q-triple}) is a singularity
and the other two terms are an appropriate idempotent and
\emph{triple term}, then the three term product, or \emph{triple} is regular at $z_1 = z_2 =
\dots = z_n \neq 0$. we may now prove the first statement of Theorem
\ref{q-fulltheorem}.

\begin{proposition}\label{q-jimtheorem1} The restriction of the rational function
$F_\Lambda (z_1, \dots , z_n)$ to the subspace
$\mathcal{H}_\lambda$ is regular at $z_1 = z_2 = \dots = z_n \neq 0$.
\end{proposition}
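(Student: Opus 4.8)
The plan is to prove regularity by showing that every singularity factor in the product $F_\Lambda(z_1,\dots,z_n)$ can, after a suitable reordering, be enclosed in a three-term product that is regular at $z_1=\dots=z_n\neq 0$ by Lemma \ref{q-regular}. First I would locate the singularities precisely. By (\ref{q-smallf}) and the substitution (\ref{substitution}), the factor attached to the pair $(i,j)$ has a pole on the diagonal $z_i=z_j$ exactly when $c_i(\Lambda)=c_j(\Lambda)$. Along any one principal hook the contents run through $\dots,-2,-1,0,1,2,\dots$ without repetition, so no two boxes of the same principal hook share a content. Consequently every singularity pair $(a,c)$ consists of boxes lying in two \emph{distinct} principal hooks, and on $\mathcal{H}_\lambda$ the parameters $z_a$ and $z_c$ are independent hook-variables that coincide only in the limit under consideration.

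Next I would carry out the local regularisation of a single singularity. Given a pair $(a,c)$ with $c_a(\Lambda)=c_c(\Lambda)$, I would select a box $b$ neighbouring $a$ within $a$'s own principal hook, so that $c_b(\Lambda)=c_a(\Lambda)\pm 1$; since $a$ and $b$ lie in one hook they carry equal $z$-values on $\mathcal{H}_\lambda$, whence their arguments satisfy $w_a=q^{\pm 2}w_b$ and, by (\ref{P-})--(\ref{P+}), the factor $F(w_a,w_b)$ is a scalar multiple of the idempotent $P^-$ or $P^+$. Applying the braid relation (\ref{q-triple}) to the three relevant adjacent factors rewrites them as $F_i(w_a,w_b)\,F_{i+1}(w_a,w_c)\,F_i(w_b,w_c)$, with the singularity in the middle and the idempotent factor on the left satisfying exactly the hypothesis $a=q^{\pm 2}b$ of Lemma \ref{q-regular}; the lemma then gives regularity of the triple at the limit. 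Where two matching boxes instead have neighbours whose contents differ by more than one, the corresponding factor is already invertible by (\ref{q-inverse}) and needs no treatment.

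The remaining, and principal, difficulty is the global bookkeeping. I would reorder the whole product, sliding commuting factors past one another with (\ref{q-commute}) and performing braid moves with (\ref{q-triple}), so that every singularity factor is simultaneously brought into the middle of a regular triple of the above form, all while respecting the reverse-lexicographic ordering that fixes the subscript $j-i$ of each factor. I expect to organise this as an induction on the number $d$ of principal hooks: a single hook carries no singularities and is regular outright, while the inductive step adjoins one further hook, whose cross-terms with the hooks already treated create a family of content-coincidences --- one for every diagonal the new hook shares with an old one --- that must be discharged one at a time. The hard part is verifying that these many reorderings are mutually compatible: that the moves needed to dock each singularity against its regularising idempotent never demand a single factor in two incompatible positions, that the subscript constraints imposed by the ordering are met throughout, and that the corner boxes of content $0$ (which possess both a row- and a column-neighbour) are handled consistently. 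Exhibiting one coherent reordering that discharges all singularities at once is where the substance of the argument lies.
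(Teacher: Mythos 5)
Your local analysis is exactly the paper's: singularities occur only between boxes lying in two distinct principal hooks; the neighbour $b$ of $a$ inside $a$'s own hook satisfies $w_a = q^{\pm 2} w_b$ on $\mathcal{H}_\lambda$, so that factor is proportional to the idempotent of (\ref{P-}) or (\ref{P+}); and Lemma \ref{q-regular} then makes the triple idempotent--singularity--triple-term regular. But there is a genuine gap, and you name it yourself: everything rests on ``exhibiting one coherent reordering that discharges all singularities at once,'' and that is precisely what the proposal does not do --- it is the actual content of the paper's proof. The paper supplies it in two steps that are absent from your plan. First, it reduces to a single distinguished tableau: relations (\ref{q-triple}) and (\ref{q-commute}) give the exchange identity (\ref{q-jimtheorem1equation}) relating $F_\Lambda$ and $F_{\Lambda'}$ for tableaux differing by an adjacent transposition, and since the two exchanged factors are regular and invertible at $z_k = z_{k+1} \neq 0$ (by (\ref{q-inverse}), because $|c_k(\Lambda)-c_{k+1}(\Lambda)| \geqslant 2$ for standard tableaux), regularity for $\Lambda$ is equivalent to regularity for $\Lambda'$; walking along a chain of such transpositions reduces everything to the hook tableau $\Lambda^\circ$. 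Second, for $\Lambda^\circ$ the paper writes down one explicit global ordering, $F_{\Lambda^\circ}(z_1,\dots,z_n) = \prod_{i=1}^d C_i R_i$ with the column and row products (\ref{q-cproduct}) and (\ref{q-rproduct}), in which every singularity term automatically stands immediately to the left of its triple term. Your proposed induction on the number of hooks, applied to an arbitrary standard tableau, never confronts the fact that for a general $\Lambda$ the factors you need to braid together are scattered with incompatible generator subscripts; the reduction to $\Lambda^\circ$ is what makes the bookkeeping finite and explicit.

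Moreover, the mechanism you describe for producing the left idempotent --- braiding the three \emph{existing} factors for the pairs $(a,b)$, $(a,c)$, $(b,c)$ until they are adjacent --- cannot work globally, for exactly the reason you flag: two singularities can demand the same factor. Take $\lambda = (3,3,3)$, whose hook tableau has contents $(0,-1,-2,1,2,0,-1,1,0)$; the singularities $(1,6)$ and $(1,9)$ would both require the factor for the pair $(1,2)$ as their left idempotent, and indeed with different subscripts ($F_4(w_1,w_2)$ for one, $F_7(w_1,w_2)$ for the other), so the single original factor $F_1(w_1,w_2)$ cannot serve either, let alone both. The paper's device is different: it never moves that factor at all, but \emph{inserts} a fresh copy of the idempotent $P^\pm_{b-a-1}$ in front of each singularity. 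The insertion leaves the value of $C_i$ or $R_i$ unchanged because the chosen ordering guarantees that the product of factors to the left of the singularity is divisible on the right by $F_{b-a-1}(w_a,w_{a+1})$, and an idempotent absorbs an extra copy of itself. This insertion-via-divisibility argument, together with the reduction to the hook tableau, is what closes the gap your proposal leaves open.
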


\begin{proof}
Consider any standard tableau $\Lambda'$ obtained from the tableau $\Lambda$
by an adjacent transposition of its entries, say by $\sigma_k\in S_n$.
Using the relations (\ref{q-triple}) and (\ref{q-commute}), we derive
the equality of rational functions in the variables $z_1, \ldots , z_n$
\[
F_\Lambda(z_1, \ldots , z_n)F_{n-k}
\bigl( 
q^{2c_{k+1}(\Lambda)}z_{k+1}, q^{2c_k(\Lambda)}z_k
\bigr)
=
\]
\begin{equation}\label{q-jimtheorem1equation}
F_k
\bigl( 
q^{2c_k(\Lambda)}z_k, q^{2c_{k+1}(\Lambda)}z_{k+1}
\bigr)
F_{\Lambda^{\prime}}(z'_1, \ldots ,z'_n),
\end{equation}
where the sequence of variables $(z'_1, \ldots ,z'_n)$\ is obtained from
the sequence $(z_1, \ldots , z_n)$ by exchanging the terms $z_k$ and 
$z_{k+1}$. Observe that
\[
(z'_1, \ldots ,z'_n)\in\mathcal{H}_{\Lambda'}
\quad\Leftrightarrow\quad
(z_1, \ldots , z_n)\in\mathcal{H}_\Lambda.
\]
Also observe that here $| c_k(\Lambda)-c_{k+1}(\Lambda)|\geqslant2$
because the tableaux $\Lambda$ and $\Lambda'$ are standard.
Therefore the functions 
\[
F_k
\bigl( 
q^{2c_k(\Lambda)}z_k, q^{2c_{k+1}(\Lambda)}z_{k+1}
\bigr)
\ \quad\textrm{and}\ \quad
F_{n-k}
\bigl( 
q^{2c_{k+1}(\Lambda)}z_{k+1}, q^{2c_k(\Lambda)}z_k
\bigr)
\]
appearing in the equality (\ref{q-jimtheorem1equation}),
are regular at $z_k=z_{k+1} \neq 0$.
Moreover, their values at $z_k=z_{k+1} \neq 0$ are invertible
in the algebra $H_n$, see the relation (\ref{q-inverse}). 
Due to these two observations, the equality (\ref{q-jimtheorem1equation})
shows that Proposition \ref{q-jimtheorem1} is equivalent to its counterpart for
the tableau $\Lambda'$ instead of $\Lambda$.

Let us take the hook tableau $\Lambda^\circ$ of shape $\lambda$.  
There is a chain $\Lambda,\Lambda', \ldots ,\Lambda^\circ$ of standard tableaux
of the same shape $\lambda$, such that each subsequent tableau in the 
chain is
obtained from the previous one by an adjacent transposition of the 
entries.
Due to the above argument, it now suffices to prove Proposition \ref{q-jimtheorem1} 
only in the case $\Lambda=\Lambda^\circ$.

We will prove the statement by reordering the factors of the
product $F_{\Lambda^\circ} (z_1, \dots , z_n)$, using relations
(\ref{q-triple}) and (\ref{q-commute}), in such a way that each
singularity is part of a triple which is regular at $z_1 = z_2 =
\dots = z_n \neq 0$, and hence the whole of $F_{\Lambda^\circ} (z_1, \dots ,
z_n)$ will be manifestly regular.

Define $g(a,b)$ to be the following; \[ g(a,b) = \left\{
\begin{array}{ccc}
  F_{b-a} (w_a, w_b) & \textrm{if} & a<b \\
  1 & \textrm{if} & a \geqslant b
\end{array} \right. \]
where $w_i$ is the substitution (\ref{substitution}).

Now, let us divide the diagram $\lambda$ into two parts,
consisting of those boxes with positive contents and those with
non-positive contents as in Figure \ref{steps}. Consider the
entries of the $i^{\mbox{\small th}}$ column of the hook tableau
$\Lambda^\circ$ of shape $\lambda$ that lie below the steps. If $u_1,
u_2, \dots , u_k$ are the entries of the $i^{\mbox{\small th}}$ column
below the steps, we define
\begin{equation}\label{q-cproduct} C_i = \prod_{j=1}^n g(u_1 , j)
g(u_2 , j) \cdots g(u_k , j).
\end{equation} Now consider the entries of the $i^{\mbox{\small th}}$ row of $\Lambda^\circ$ that lie
above the steps. If $v_1, v_2, \dots , v_l$ are the entries of the
$i^{\mbox{\small th}}$ row above the steps, we define
\begin{equation}\label{q-rproduct} R_i = \prod_{j=1}^n g(v_1 , j)
g(v_2 , j) \cdots g(v_l , j).
\end{equation}

Our choice of the hook tableau was such that the following is
true; if $d$ is the number of principal hooks of $\lambda$ then by
relations (\ref{q-triple}) and (\ref{q-commute}) we may reorder the
factors of $F_{\Lambda^\circ} (z_1, \dots , z_n)$ such that \[ F_{\Lambda^\circ}
(z_1, \dots , z_n) = \prod_{i=1}^d C_i R_i . \]

Now, each singularity $(a,b)$ has its corresponding term $F_{b-a}
(w_a,w_b)$ contain in some product $C_i$ or $R_i$.
This singularity term will be on the immediate left of the
 term $F_{b-a-1} (w_{a+1}, w_b)$. Also, this ordering has been chosen such
that the product of factors to the left of any such singularity in
$C_i$ or $R_i$ is divisible on the right by $F_{b-a-1}(w_a,w_{b+1})$.\\
Therefore we can replace the pair $F_{b-a}
(w_a,w_b)F_{b-a-1} (w_{a+1}, w_b)$ in $C_i$ by the triple
\[ P_{b-a-1}^- F_{b-a}
(w_a,w_b)F_{b-a-1} (w_{a+1}, w_b), \] where
$P_{b-a-1}^- = \frac{-1}{q+q^{-1}} F_{b-a-1}(w_a,w_{a+1})$ is the
idempotent (\ref{P-}). Divisibility on the right by $F_{b-a-1}(w_a,w_{a+1})$ means the addition of the idempotent has no
effect on the value of the product $C_i$. Similarly, in the product $R_i$ we can replace the pair by \[ P_{b-a-1}^+ F_{b-a}
(w_a,w_b)F_{b-a-1} (w_{a+1}, w_b). \] 
By Lemma \ref{q-regular}, the above triples are regular at $z_1 =
z_2 = \dots = z_n \neq 0$, and therefore, so too are the products $C_i$
and $R_i$, for all $1 \leqslant i \leqslant d$. Moreover, this
means $F_{\Lambda^\circ} (z_1, \dots , z_n)$ is regular at $z_1 = z_2 =
\dots = z_n \neq 0$.
\end{proof}

{\addtocounter{definition}{1} \bf Example \thedefinition .} As an
example consider the hook tableau of the Young diagram $\lambda =
(3,3,2)$, as shown in Example 2.1.

In the original reverse-lexicographic ordering the product $F_{\Lambda^{\circ}}(z_1,
\dots , z_n)$ is written as;
\[
\begin{array}{rl}
   F_{\Lambda^\circ} (z_1, \dots , z_n) = & F_1(w_1,w_2)F_2(w_1,w_3)F_1(w_2,w_3)F_3(w_1,w_4)F_2(w_2,w_4)F_1(w_3,w_4)\\&
   F_4(w_1,w_5)F_3(w_2,w_5)F_2(w_3,w_5)F_1(w_4,w_5)F_5(w_1,w_6)F_4(w_2,w_6)\\&F_3(w_3,w_6)F_2(w_4,w_6)F_1(w_5,w_6)F_6(w_1,w_7)F_5(w_2,w_7)F_4(w_3,w_7)\\&F_3(w_4,w_7)F_3(w_4,w_7)F_2(w_5,w_7)F_1(w_6,w_7)F_7(w_1,w_8)F_6(w_2,w_8)\\&F_5(w_3,w_8)F_4(w_4,w_8)F_3(w_5,w_8)F_2(w_6,w_8)F_1(w_7,w_8)\\
\end{array}
\]
we may now reorder this product into the form below using
relations (\ref{q-triple}) and (\ref{q-commute}) as described in the
above proposition. The terms in square brackets are the singularity terms
with their appropriate triple terms.
\[
\begin{array}{rl}
   F_{\Lambda^\circ} (z_1, \dots , z_n) = &
   F_1(w_1,w_2)F_2(w_1,w_3)F_1(w_2,w_3)F_3(w_1,w_4)F_2(w_2,w_4)F_1(w_3,w_4)\\&F_4(w_1,w_5)F_3(w_2,w_5)F_2(w_3,w_5)\left[F_5(w_1,w_6)F_4(w_2,w_6)\right]F_3(w_3,w_6)\\&F_6(w_1,w_7)\left[F_5(w_2,w_7)F_4(w_3,w_7)\right]F_7(w_1,w_8)F_6(w_2,w_8)F_5(w_3,w_8)\\&
   \cdot F_1(w_4,w_5)F_2(w_4,w_6)F_1(w_5,w_6)F_3(w_4,w_7)F_2(w_5,w_7)\\&\left[F_4(w_4,w_8)F_3(w_5,w_8)\right] \cdot F_1(w_6,w_7)F_2(w_6,w_8)F_1(w_7,w_8)\\
\end{array}
\]
We may now add the appropriate idempotents to these singularity-triple term pairs to form triples. Since each of these triples are regular at $z_1 = z_2 = \dots
= z_n$ then so too is the whole of $F_{\Lambda^\circ} (z_1, \dots , z_n)$.
{\nolinebreak \hfill \rule{2mm}{2mm}

\quad

Therefore, due to the above proposition an element $F_\Lambda \in
H_n$ can now be defined as the value of $F_\Lambda (z_1,
\dots , z_n)$ at $z_1 = z_2 = \dots = z_n \neq 0$. Note that for $n=1$ we have $F_\Lambda=1$. For any $n\geqslant1$, 
take the expansion of the element $F_\Lambda\in H_n$ in the basis of the 
elements $T_\sigma$ where $\sigma$ is ranging over $S_n$.

\begin{proposition}\label{T_0coeff} The coefficient in $F_\Lambda\in H_n$ of the element $T_0$ is $1$.
\end{proposition}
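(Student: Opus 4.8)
The plan is to expand the ordered product (\ref{q-bigf}), read off the coefficient of $T_0$ \emph{before} passing to the limit $z_1=\cdots=z_n$, and argue by a length filtration of $H_n$ that only one term of the expansion can contribute. By (\ref{q-smallf}) each factor $F_{j-i}\bigl(q^{2c_i(\Lambda)}z_i,q^{2c_j(\Lambda)}z_j\bigr)$ equals $T_{j-i}+c_{ij}$ for a scalar $c_{ij}$ (a rational function of the $z$'s and $q$). Multiplying out, each term of $F_\Lambda(z_1,\dots,z_n)$ is indexed by a subset $S$ of the set $P$ of pairs $(i,j)$ with $1\leqslant i<j\leqslant n$: from each factor in $S$ we keep $T_{j-i}$ and from each factor outside $S$ we keep its scalar, preserving the reverse-lexicographic order. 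Note that $|P|$ equals the length $\ell(\sigma_0)$ of the longest element $\sigma_0$.

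First I would establish the length filtration: a product of $m$ generators $T_{k_1}\cdots T_{k_m}$ lies in the span of those $T_\sigma$ with $\ell(\sigma)\leqslant m$. This follows by induction on $m$ from the quadratic relation (\ref{Hecke1}), which gives $T_i^2=1+(q-q^{-1})T_i$, equivalently the multiplication rules $T_w T_{s_i}=T_{ws_i}$ when $\ell(ws_i)>\ell(w)$ and $T_w T_{s_i}=T_{ws_i}+(q-q^{-1})T_w$ when $\ell(ws_i)<\ell(w)$; in either case the basis elements produced have length at most $\ell(w)+1$. Consequently, for every proper subset $S\subsetneq P$ the product $\prod^{\rightarrow}_{(i,j)\in S}T_{j-i}$ has fewer than $|P|=\ell(\sigma_0)$ factors, hence expands into $T_\sigma$ with $\ell(\sigma)<\ell(\sigma_0)$, so $T_0$ cannot occur in it.

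Thus the only contribution to $T_0$ comes from $S=P$, whose scalar prefactor is the empty product $1$. Grouping the pairs of $P$ by their second index $j$, the reverse-lexicographic order gives
\[
\prod^{\rightarrow}_{(i,j)\in P}T_{j-i}=T_1\,(T_2T_1)\,(T_3T_2T_1)\cdots(T_{n-1}T_{n-2}\cdots T_1),
\]
which is the standard reduced word for $\sigma_0$, of length $\ell(\sigma_0)$; hence this product equals $T_0$ exactly, with coefficient $1$. Therefore the coefficient of $T_0$ in $F_\Lambda(z_1,\dots,z_n)$ is the constant rational function $1$.

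Finally I would address the poles: although individual factors of (\ref{q-bigf}) are singular on $z_1=\cdots=z_n$, the computation above shows the coefficient of $T_0$ is identically $1$ and thus trivially regular, the poles all residing in the coefficients of $T_\sigma$ with $\sigma\neq\sigma_0$. Its value at $z_1=\cdots=z_n\neq 0$, which by Proposition \ref{q-jimtheorem1} is the corresponding coefficient of $F_\Lambda\in H_n$, therefore equals $1$. The only substantive steps are the length-filtration lemma and the identification of the surviving word as a reduced expression for $\sigma_0$; everything else is bookkeeping.
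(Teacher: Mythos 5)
Your proof is correct and follows essentially the same route as the paper: expand each factor as $T_{j-i}$ plus a scalar, observe that the reverse-lexicographic product $\prod^{\rightarrow}_{(i,j)}\sigma_{j-i}$ is a reduced decomposition of $\sigma_0$ so that only the full product contributes to $T_0$ (with coefficient identically $1$), and then pass to the limit. The only difference is that you spell out the length-filtration argument which the paper leaves implicit in the word ``hence''; this is a welcome but not substantively different addition.
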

\begin{proof} Expand the product (\ref{q-bigf})
as a sum of the elements $T_\sigma$ with coefficients
from the field of rational functions of $z_1, \ldots , z_n$;
these functions take values in $\mathbb{C}(q)$. 
The decomposition in $S_n$
with ordering of the pairs $(i, j)$ as in (\ref{q-bigf})
\[
\sigma_0=\prod_{(i,j)}^{\longrightarrow}\ \sigma_{j-i}
\]
is reduced, hence the coefficient at $T_0=T_{\sigma_0}$ in the
expansion of (\ref{q-bigf}) is $1$. By the definition of $F_\Lambda$,
then the coefficient of $T_0$ in $F_\Lambda$  must be also $1$
\end{proof}

In particular this shows that $F_\Lambda \neq 0$ for any nonempty
diagram $\lambda$. Let us now denote by $\varphi$ the involutive
antiautomorphism of the algebra $H_n$ over the field $\mathbb{C}(q)$, defined by
$\varphi (T_i) = T_i$ for every $i \in 1, \dots , n-1$.

\begin{proposition}\label{q-varphi} The element $F_\Lambda T^{-1}_0$ is $\varphi$-invariant. \end{proposition}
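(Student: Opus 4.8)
The plan is to reduce the statement to an identity of rational functions and then specialize. First I would record how $\varphi$ acts on the building blocks. Since $\varphi$ is $\mathbb{C}(q)$-linear and fixes every $T_i$, it fixes each factor $F_i(a,b)=T_i+\frac{q-q^{-1}}{a^{-1}b-1}$; being an antiautomorphism it then sends the ordered product (\ref{q-bigf}) to the same product read in the opposite order. Moreover $\varphi(T_0)=T_0$: reversing a reduced decomposition of the involution $\sigma_0$ yields a reduced decomposition of $\sigma_0^{-1}=\sigma_0$, and $T_\sigma$ is independent of the chosen reduced decomposition, so also $\varphi(T_0^{-1})=T_0^{-1}$. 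Hence $\varphi(F_\Lambda T_0^{-1})=\varphi(T_0^{-1})\varphi(F_\Lambda)=T_0^{-1}\varphi(F_\Lambda)$, and the assertion $\varphi(F_\Lambda T_0^{-1})=F_\Lambda T_0^{-1}$ is equivalent to
\[ \varphi(F_\Lambda)=T_0\,F_\Lambda\,T_0^{-1}. \]
Because both $\varphi(F_\Lambda(z_1,\dots,z_n))$ and $T_0 F_\Lambda(z_1,\dots,z_n)T_0^{-1}$ are regular at $z_1=\dots=z_n\neq0$ by Proposition \ref{q-jimtheorem1}, it suffices to prove the corresponding identity of rational functions and then let $z_1=\dots=z_n$.

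Next I would identify conjugation by $T_0$. One checks that $T_0T_kT_0^{-1}=T_{n-k}$ for all $k$: this is the diagram automorphism induced by the longest element, and follows from $\sigma_0\sigma_k\sigma_0=\sigma_{n-k}$ together with a short computation using the Hecke relations (\ref{Hecke1}) and (\ref{Hecke2}). Writing $\psi$ for this automorphism, we get $\psi(F_k(a,b))=F_{n-k}(a,b)$, so conjugation by $T_0$ leaves the arguments of each factor untouched and merely reflects its index. Combining this with the first paragraph, the target identity $\varphi(F_\Lambda(z))=T_0F_\Lambda(z)T_0^{-1}=\psi(F_\Lambda(z))$ says precisely that $F_\Lambda(z)$ is invariant under the antiautomorphism $\theta:=\psi\varphi$, which reverses the order of the product (\ref{q-bigf}) and sends each factor $F_{j-i}(w_i,w_j)$ to $F_{n-j+i}(w_i,w_j)$.

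To prove $\theta(F_\Lambda(z))=F_\Lambda(z)$ I would argue at the level of the product itself, using only (\ref{q-triple}) and (\ref{q-commute}). The index sequence of (\ref{q-bigf}) in reverse-lexicographic order is a reduced word for $\sigma_0$ (this is the observation used in Proposition \ref{T_0coeff}); reversing it and reflecting each index $k\mapsto n-k$ produces another reduced word for $\sigma_0$, while the pair of arguments $(w_i,w_j)$ attached to the factor joining the entries $i$ and $j$ is unchanged. Thus $\theta(F_\Lambda(z))$ is a product of the same $F$-factors, indexed by a different reduced word for $\sigma_0$ but carrying the same spectral parameters, and the relations (\ref{q-triple}) and (\ref{q-commute}) are exactly the braid and commuting moves needed to pass between the two words. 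The mechanism is already visible for $n=3$: relation (\ref{q-triple}) rewrites $F_1(w_1,w_2)F_2(w_1,w_3)F_1(w_2,w_3)$ as $F_2(w_2,w_3)F_1(w_1,w_3)F_2(w_1,w_2)$, and applying $\varphi$ to this equality yields $\varphi(F_\Lambda(z))=F_2(w_1,w_2)F_1(w_1,w_3)F_2(w_2,w_3)=\psi(F_\Lambda(z))$ at once.

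The hard part is exactly this last step: showing, for general $n$ and general $\lambda$, that the chain of moves connecting the reverse-lexicographic word to its reflected reversal can be arranged so that every braid move is an instance of (\ref{q-triple}), i.e. occurs among three consecutive factors whose arguments have the form $(a,b),(a,c),(b,c)$ demanded by the Yang--Baxter relation, and every transposition of commuting factors is an instance of (\ref{q-commute}). I expect to organize this as an induction that mirrors the reordering carried out in the proof of Proposition \ref{q-jimtheorem1}, tracking the arguments of the factors throughout so that the spectral parameters match at each move. Once $\theta(F_\Lambda(z))=F_\Lambda(z)$ is established, specializing at $z_1=\dots=z_n\neq0$ gives $\varphi(F_\Lambda)=T_0F_\Lambda T_0^{-1}$, and therefore $\varphi(F_\Lambda T_0^{-1})=F_\Lambda T_0^{-1}$, as required.
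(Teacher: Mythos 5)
Your proposal is correct and follows essentially the same route as the paper: $\varphi$ fixes each factor $F_i(a,b)$ and hence reverses the ordered product, conjugation by $T_0$ reflects indices via $F_{n-i}(a,b)\,T_0 = T_0\,F_i(a,b)$, and the identity between the reversed product and the index-reflected forward product is exactly what the paper establishes by appealing to (\ref{q-triple}) and (\ref{q-commute}). The step you flag as the ``hard part'' is handled by the paper at the same level of detail --- it simply asserts that rewriting --- and your sketch (passing between two reduced words for $\sigma_0$ by braid and commutation moves, with arguments attached to the crossing of strands $i$ and $j$) is precisely the intended argument, so no genuinely different method or missing idea is involved.
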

\begin{proof} Any element of the algebra $H_n$ of the form
$F_i(a, b)$ is $\varphi$-invariant. 
Hence applying the antiautomorphism $\varphi$ to an element of 
$H_n$ the form (\ref{q-bigf}) just reverses the ordering of
the factors corresponding to the pairs $(i, j)$.
Using the relations (\ref{q-triple}) and (\ref{q-commute}),
we can rewrite the reversed product as
\[
\prod_{(i,j)}^{\longrightarrow}\ F_{ n-j+i}
\bigl( q^{2c_i(\Lambda)}z_i, q^{2c_j(\Lambda)}z_j\bigr)
\]
where the pairs $(i, j)$ are again ordered as in (\ref{q-bigf}).
But due to (\ref{Hecke2}) and (\ref{Hecke3}),
we also have the identity in the algebra $H_n$
\[
F_{ n-i}(a,b)T_0=T_0 F_i(a, b).
\]
This identity along with the equality 
$\varphi(T_0)=T_0$ implies that
any value of the function $F_\Lambda(z_1, \ldots , z_n)T^{-1}_0$ is 
$\varphi$-invariant. So is the element $F_\Lambda T^{-1}_0 \in H_n$
\end{proof}

\begin{proposition}\label{q-stripping} If $\lambda= (\alpha_1,
\alpha_2, \dots, \alpha_d | \beta_1, \beta_2, \dots , \beta_d)$
and $\mu = (\alpha_{k+1}, \alpha_{k+2}, \dots , \alpha_d |
\beta_{k+1}, \beta_{k+2}, \dots , \beta_d)$, then $F_{\Lambda^\circ} = P
\cdot F_{\mathrm{M}^\circ}$, for some element $P \in H_n$.
\end{proposition}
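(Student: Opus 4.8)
The plan is to build on the reordering established in the proof of Proposition \ref{q-jimtheorem1}. There we rewrote $F_{\Lambda^\circ}(z_1, \ldots , z_n) = \prod_{i=1}^d C_i R_i$, where $C_i$ collects the factors $g(a,\cdot)$ whose left entry $a$ lies in the $i^{\mathrm{th}}$ column below the steps and $R_i$ those whose left entry lies in the $i^{\mathrm{th}}$ row above the steps. First I would record the combinatorial fact that the entries occupying the first $k$ principal hooks of $\Lambda^\circ$ are precisely $1, 2, \ldots , N_k$ with $N_k = h_1 + \cdots + h_k$, and that a box $(r,i)$ below the steps lies in principal hook $i$ (since its content $i-r \leqslant 0$ forces $\min(r,i)=i$), with the analogous statement for rows above the steps. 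Hence columns and rows indexed $1, \ldots , k$ carry exactly the entries $\leqslant N_k$, and the product splits as
\[ F_{\Lambda^\circ}(z_1, \ldots , z_n) = \Bigl( \prod_{i=1}^k C_i R_i \Bigr)\Bigl( \prod_{i=k+1}^d C_i R_i \Bigr), \]
where the left bracket contains every factor $g(a,j)=F_{j-a}(w_a,w_j)$ with $a \leqslant N_k$ and the right bracket those with $a > N_k$ (so that $j > a > N_k$ as well).

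The key step is to identify the right-hand bracket with $F_{\mathrm{M}^\circ}$. Here I would use two invariants of the operation of stripping the first $k$ principal hooks. A box at $(i,j)$ in $\lambda$ that survives in $\mu$ moves to $(i-k,j-k)$, so its content $j-i$, and hence the argument $q^{2c}$ attached to it, is unchanged; and the generator index $j-a$ of a factor $F_{j-a}(w_a,w_j)$ depends only on the difference of entry labels, which is preserved by the relabelling $a \mapsto a - N_k$. Consequently, after relabelling $N_k+1, \ldots , n$ as $1, \ldots , m$ with $m = n - N_k = |\mu|$, the right-hand bracket becomes, factor for factor and in the same order, the product $\prod_{i=1}^{d-k} C'_i R'_i$ attached to the hook tableau $\mathrm{M}^\circ$ of $\mu$; note the surviving factors involve exactly the generators $T_1, \ldots , T_{m-1}$, just as for $\mathrm{M}^\circ$ itself. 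Thus $\prod_{i=k+1}^d C_i R_i = F_{\mathrm{M}^\circ}(z_{N_k+1}, \ldots , z_n)$ as rational functions.

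To finish I would invoke the regularity already proved. Proposition \ref{q-jimtheorem1} shows each $C_i$ and each $R_i$ is regular at $z_1 = \cdots = z_n \neq 0$, so the left bracket is regular there with some value $P \in H_n$, while the right bracket is regular with value $F_{\mathrm{M}^\circ}$ by the same proposition applied to $\mu$ (whose contents match those of the corresponding boxes of $\lambda$, so the common-parameter limits agree). Passing to the limit $z_1 = \cdots = z_n \neq 0$ in the displayed factorisation then yields $F_{\Lambda^\circ} = P \cdot F_{\mathrm{M}^\circ}$.

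The main obstacle I anticipate is the bookkeeping in the identification step rather than any new algebraic input: one must check that splitting $\prod_{i=1}^d C_i R_i$ according to the value of the left entry genuinely separates the factors of the first $k$ hooks (including all the crossing factors $g(a,j)$ with $a \leqslant N_k < j$, which belong to $P$) from the factors living wholly in the last $d-k$ hooks, with no factor shared, and that the surviving factors reassemble in precisely the reverse-lexicographic order dictated by $\mu$. Since both the content-preservation and the index-preservation observations are purely combinatorial consequences of the hook filling, once they are in place the argument is essentially formal.
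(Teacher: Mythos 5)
Your proof is correct and takes essentially the same route as the paper: both split the reordered product $\prod_{i=1}^d C_i R_i$ from Proposition \ref{q-jimtheorem1} after the first $k$ hooks, identify the tail with $F_{\mathrm{M}^\circ}(z_{x+1}, \dots , z_n)$, and use the regularity of the $C_i$ and $R_i$ to pass to the limit. The paper simply asserts this factorisation as a consequence of the ordering, whereas you spell out the content- and index-preservation bookkeeping; that is elaboration, not a different argument.
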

\begin{proof}
Here the shape $\mu$ is obtained by removing the first $k$
principal hooks of $\lambda$. Let $x$ be last entry in the $k^{\mbox{\small th}}$ row
of the hook tableau of shape $\lambda$. By the ordering described in
Proposition \ref{q-jimtheorem1},
\[ F_{\Lambda^\circ}(z_1, \dots ,
z_n) = \prod_{i=1}^k C_iR_i \cdot F_{\mathrm{M}^\circ}(z_{x+1}, \dots,
z_{n}),\]
where $C_i$ and $R_i$ are defined by (\ref{q-cproduct}) and (\ref{q-rproduct}).\\
Since all products $C_i$ and $R_i$ are regular at $z_1 = z_2 =
\dots = z_n \neq 0$, Proposition \ref{q-jimtheorem1} then gives us the
required statement. \end{proof}

In any given ordering of $F_\Lambda(z_1, \dots , z_n)$, we want a
singularity term to be placed next to an appropriate triple term
such that we may then form a regular triple. In that case we will
say these two terms are 'tied'. However, proving the divisibilities described in the next two propositions require some pairs
to be 'untied', in which case we must form a new ordering. This is
the content of the following proofs. Some explicit examples will
then given in Example 2.10 below.

\begin{proposition}\label{q-jimtheorem2} Suppose the numbers $u < v$ stand next to each
other in the same column of the hook tableau $\Lambda^\circ$ of shape
$\lambda$. First, let $s$ be the last entry in the row containing
$u$. If $c_v < 0$ then the element $F_{\Lambda^\circ} \in H_n$
is divisible on the left %and on the right 
by $F_{u}(q^{2c_u(\Lambda^\circ)}, q^{2c_v(\Lambda^\circ)}) =
T_{u} - q$. If $c_v \geqslant 0$ then the element $F_{\Lambda^\circ} \in
H_n$ is divisible on the left by the product
\[ \prod_{i = u, \dots, s}^\leftarrow \left( \prod_{j= s+1, \dots,
v}^\rightarrow F_{i+j-s-1}(q^{2c_i(\Lambda^\circ)}, q^{2c_j(\Lambda^\circ)}) \right) \]
\end{proposition}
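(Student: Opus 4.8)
The plan is to prove both cases by the mechanism of Proposition \ref{q-jimtheorem1}: reorder the rational function $F_{\Lambda^\circ}(z_1,\dots,z_n)$ using the relations (\ref{q-triple}) and (\ref{q-commute}) so that the claimed left divisor is displayed as the leftmost block of factors, then show that the remaining product is regular at $z_1=\dots=z_n\neq0$. Passing to the value at $z_1=\dots=z_n\neq0$ then exhibits $F_{\Lambda^\circ}$ as $(\text{divisor})\cdot(\text{regular remainder})$, which is exactly left-divisibility in $H_n$. It is convenient first to strip off the hooks above the one containing $u$: by Proposition \ref{q-stripping} we may write $F_{\Lambda^\circ}=P\cdot F_{\mathrm{M}^\circ}$ where $\mu$ begins with the principal hook containing $u$, and contents are preserved under this stripping, so it suffices to produce the divisor on the left of $F_{\mathrm{M}^\circ}$. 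This reduces Case 1 to $u,v=u+1$ lying in the leg of the first hook, and Case 2 to $u$ lying in the arm of the first hook with $v$ directly below it.

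I would read the reordering through the wiring picture of the reduced word $\sigma_0=\prod^{\to}_{(i,j)}\sigma_{j-i}$: strand $k$ begins in position $k$, the factor attached to the crossing of strands $a<b$ is $F_m(w_a,w_b)$ with $m$ the position at which they cross, and (\ref{q-triple}), (\ref{q-commute}) pass between reduced words. In Case 1 the numbers $u,v=u+1$ are consecutive in a column, so strands $u,u+1$ begin in adjacent positions and their crossing can be taken first; it then carries index $u$ and, on $\mathcal{H}_\lambda$ at $z_1=\dots=z_n$, equals $T_u-q$. In Case 2 the tail of the row of $u$, namely $\{u,\dots,s\}$, and the segment $\{s+1,\dots,v\}$ form two blocks of strands whose content sets are \emph{disjoint} (the first block has contents $\geqslant c_u\geqslant1$, while the positive contents of the second reach only $c_v=c_u-1$); the displayed product is precisely the set of all crossings between these two blocks, i.e. the block transposition moving $\{s+1,\dots,v\}$ entirely to the left of $\{u,\dots,s\}$, and a short check of the position bookkeeping confirms both the exponent $a+b-s-1$ and that these crossings may be performed first. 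Its rightmost factor is the crossing of the column-adjacent pair $(u,v)$, the idempotent-type factor $F_{u+v-s-1}(q^{2c_u},q^{2c_v})=T_{u+v-s-1}-q$.

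The substance, and the main obstacle, is the regularity of the remainder, which is exactly the ``untying'' flagged before the proposition. Pulling these crossings to the front disrupts the tiling of $F_{\Lambda^\circ}(z)$ into regular triples used in Proposition \ref{q-jimtheorem1}, since the idempotent-type factors just extracted were serving as the idempotents $P^-$ completing certain triples; the corresponding singularities become untied and must be re-tied with fresh partners. Concretely, each surviving singularity $(a,b)$ must again be placed immediately to the left of its triple term $F_{b-a-1}(w_{a+1},w_b)$, with the factors to its left divisible on the right by the idempotent generator $F_{b-a-1}(w_a,w_{a+1})$, so that Lemma \ref{q-regular} applies. Because the two extracted blocks have disjoint contents and contain no same-content pair, no surviving singularity lies inside the divisor, and the remaining entries of the hook tableau can be reordered into such regular triples just as before. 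The hard part is therefore the purely combinatorial verification that a single global reordering simultaneously places the divisor on the far left in the stated order and keeps every surviving singularity tied to a regular triple; the disjointness of the content sets of $\{u,\dots,s\}$ and $\{s+1,\dots,v\}$ is what makes this possible, and Example 2.10 is meant to display such an untying explicitly.
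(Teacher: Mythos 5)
Your overall strategy (reorder $F_{\Lambda^\circ}(z_1,\dots,z_n)$ so the claimed divisor stands on the far left, then show the remainder is regular at $z_1=\dots=z_n\neq 0$) is indeed the paper's strategy, but two of your steps have genuine gaps. First, the reduction to the first principal hook does not work as you state it. From $F_{\Lambda^\circ}=P\cdot F_{\mathrm{M}^\circ}$ (Proposition \ref{q-stripping}), a factorization $F_{\mathrm{M}^\circ}=D\cdot F'$ only gives $F_{\Lambda^\circ}=P\,D\,F'$, which is \emph{not} left divisibility of $F_{\Lambda^\circ}$ by $D$: the factor $P$ stands in the way. The paper gets around this by converting left divisibility into right divisibility and back, using the $\varphi$-invariance of $F_\Lambda T_0^{-1}$ (Proposition \ref{q-varphi}) together with the identity $F_{n-i}(a,b)\,T_0=T_0\,F_i(a,b)$: left divisibility of $F_{\mathrm{M}}$ by $F_i(w_a,w_b)$ yields right divisibility of $F_{\mathrm{M}}(z_{x+1},\dots,z_n)$ by $F_{m-i}(w_{a+x},w_{b+x})$; right divisibility \emph{does} survive multiplication by $P$ on the left; converting back then gives left divisibility of $F_{\Lambda^\circ}$ by $F_{n-m+i}(w_{a+x},w_{b+x})=F_{i+x}(w_{a+x},w_{b+x})$. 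This double flip is also what produces the shifted index $i+x$; your bare stripping argument yields neither the divisibility itself nor the correct index.

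Second, in the case $c_v>0$ you correctly identify the obstacle (extracting the block divisor unties the singularities for which the extracted factors served as idempotents), but you then explicitly defer the step that constitutes the proof: the existence of a single global reordering. The paper carries this out concretely: it defines a permutation $\tau$ and a substitution homomorphism $\psi_\tau$, sets $C_1'=\psi_\tau C_1$, constructs new products $R_1'$ and $C_2'$, and shows $F_{\Lambda^\circ}(z_1,\dots,z_n)$ equals the divisor times $C_1'R_1'C_2'R_2\prod_{i=3}^d C_iR_i$, re-tying every singularity inside these products. Moreover, your description of how the re-tying would go --- every surviving singularity placed immediately to the left of its triple term, with the idempotent supplied from the left --- is not what actually happens: inside $R_1'$ a singularity $F_i(w_a,w_b)$ is tied as $F_{i-1}(w_a,w_l)F_i(w_a,w_b)P^+_{i-1}$, i.e.\ with the triple term on the \emph{left} and the idempotent adjoined on the \emph{right}, the mirrored form of Lemma \ref{q-regular} coming from the right-hand side of (\ref{q-triple}). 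So the re-tying is not ``just as before''; the disjointness of the content sets of $\{u,\dots,s\}$ and $\{s+1,\dots,v\}$ guarantees only that no singularity sits inside the divisor, and without the explicit construction of the new ordering your argument remains a plan rather than a proof.
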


\begin{proof}
%By Proposition \ref{q-varphi}, the divisibility of $F_\lambda$ by
%the element $1- (uv)$ on the left is equivalent to the
%divisibility by the same element on the right. Let us prove
%divisibility on the left.
%
%By Proposition \ref{q-stripping}, if $F_\mu$ is divisible
%on the right by $f_{uv}(c_u, c_v)$, or $f_{uv}(c_u, c_v)$ followed
%by some invertible terms, then so too will $F_\lambda$. 
%If $F_\Mu$ is divisible on the left by $F_{k}(q^{2c_u(\Lambda)}, q^{2c_v(\Lambda)})$,
%or $F_{v-u}(q^{2c_u(\Lambda)}, q^{2c_v(\Lambda)})$ preceded by some invertible terms, then $F_\Mu$ 
%is divisible on the right by $F_{v-u}(q^{2c_u(\Lambda)}, q^{2c_v(\Lambda)})$,
%or $F_{v-u}(q^{2c_u(\Lambda)}, q^{2c_v(\Lambda)})$ followed by some invertible terms.
%Hence, by Proposition \ref{q-stripping}, the product $F_\Lambda = P \cdot F_\Mu$ is also divisible on the right by $F_{v-u}(q^{2c_u(\Lambda)}, q^{2c_v(\Lambda)})$,
%or $F_{v-u}(q^{2c_u(\Lambda)}, q^{2c_v(\Lambda)})$ followed by some invertible terms. And so we can say $F_\Lambda$ is divisible on the left by $F_{v-u}(q^{2c_u(\Lambda), 2c_v(\Lambda))$,
%or $F_{v-u}(q^{2c_u(\Lambda)}, q^{2c_v(\Lambda)})$ preceded by some invertible terms. Hence we only need to prove the statement for
%$(u,v)$ such that $u$ is in the first row or first column of
%$\Lambda$.
%
Let $\lambda$ and $\mu$ be as in Proposition \ref{q-stripping} with $\lambda$ a partition of $n$ and $\mu$ a partition of $m$. If $F_\mathrm{M}(z_1, \dots, z_m)$ is divisible on the left by $F_i(w_a,w_b)$ then $F_\mathrm{M}(z_{x+1}, \dots, z_n)$ is divisible on the left by $F_i(w_{a+x},w_{b+x})$. Then, by Proposition \ref{q-varphi}, $F_\mathrm{M}(z_{x+1}, \dots, z_n)$ is divisible on the right by $F_{m-i}(w_{a+x},w_{b+x})$. Therefore, by Proposition \ref{q-stripping}, $F_{\Lambda^\circ}(z_1, \dots, z_n)$ is divisible on the right by $F_{m-i}(w_{a+x},w_{b+x})$ and so $F_{\Lambda^\circ}(z_1, \dots, z_n)$ is divisible on the left by $F_{n-m+i}(w_{a+x}, w_{b+x}) = F_{i+x}(w_{a+x},w_{b+x})$. Hence we only need to prove the statement for $(u,v)$ such that $u$ is in the first row or first column of $\Lambda^\circ$.

Let $r$ be the last entry in the first column of $\Lambda^\circ$, $s$
the last entry in the first row of $\Lambda^\circ$, and $t$ the last
entry in the second column of $\Lambda^\circ$, as shown in Figure
\ref{jimtheorem2fig}.

\begin{figure}[h] \label{jimtheorem2fig}

\begin{normalsize}
\begin{center}
\begin{picture}(275,200)
\begin{small}
\put(0,175){\framebox(25,25)[c]{ 1 }}
\put(25,175){\framebox(25,25)[c]{ $r + 1$ }}
\put(50,175){\framebox(25,25)[c]{ $r+2$ }}
\put(0,150){\framebox(25,25)[c]{ 2 }}
\put(25,150){\framebox(25,25)[c]{ $s+1$ }}
\put(50,150){\framebox(25,25)[c]{ $t+1$ }}

\put(0,25){\line(0,1){125}} \put(25,25){\line(0,1){125}}
\put(0,0){\framebox(25,25)[c]{ $r$ }}

\put(75,200){\line(1,0){50}} \put(75,175){\line(1,0){50}}
\put(125,175){\framebox(25,25)[c]{ $u$ }}
\put(150,200){\line(1,0){100}} \put(150,175){\line(1,0){100}}
\put(250,175){\framebox(25,25)[c]{ $s$ }}

\put(50,100){\line(0,1){50}} \put(25,75){\framebox(25,25)[c]{ $t$
}}

\put(75,150){\line(1,0){50}} \put(125,150){\framebox(25,25)[c]{
$v$ }} \put(150,150){\line(1,0){50}} \put(200,150){\line(0,1){25}}

\linethickness{1.5pt} \put(0,200){\line(1,0){25}}
\put(25,175){\line(1,0){25}} \put(50,150){\line(1,0){25}}
\put(25,175){\line(0,1){25}} \put(50,150){\line(0,1){25}}

\put(125,150){\line(0,1){50}}\put(150,150){\line(0,1){50}}
\put(125,200){\line(1,0){25}}\put(125,150){\line(1,0){25}}

\put(95,185){$\dots$} \put(170,185){$\dots$}\put(220,185){$\dots$}
\put(95,160){$\dots$} \put(170,160){$\dots$}

\put(10,120){$\vdots$} \put(35,
120){$\vdots$}\put(10,80){$\vdots$} \put(10,40){$\vdots$}

\end{small}
\end{picture}
\end{center}
\end{normalsize}
 \caption{The first two principal hooks of the hook tableau $\Lambda^\circ$}
 \end{figure}

We now continue this proof by considering three cases and showing
the appropriate divisibility in each.

\emph{(i)} \quad  If $c_v < 0$ (i.e. $u$ and $v$ are in the first
column of $\Lambda^\circ$) then $v = u+1$ and $F_{\Lambda^\circ}(z_1, \dots ,
z_n)$ can be written as $F_{\Lambda^\circ}(z_1, \dots, z_n) = F_{u}(w_u, w_v) \cdot F$. \\
Starting with $F_{\Lambda^\circ}(z_1, \dots , z_n)$ written in the
ordering described in Proposition \ref{q-jimtheorem1} and simply
moving the term $F_{v-u}(w_u, w_v)$ to the left
results in all the singularity terms in the product $F$ remaining
tied to the same triple terms as originally described in that
ordering, and the index of $F_{v-u}(w_u, w_v)$ increases from $u-v$ to $u$. Therefore we may still form regular triples for
each singularity in $F$, and hence $F$ is regular at $z_1 = z_2 = \dots = z_n \neq 0$.\\
So by considering this expression for $F_{\Lambda^\circ}(z_1, \dots ,
z_n)$ at $z_1 = z_2 = \dots = z_n \neq 0$ we see that $F_{\Lambda^\circ}$ will be
divisible on the left by $F_{u}(q^{2c_u(\Lambda^\circ)}, q^{2c_v(\Lambda^\circ)}) = T_{v-u} - q$.

\emph{(ii)} \quad  If $c_v = 0$ then $v=s+1$, and $F_{\Lambda^\circ}(z_1,
\dots , z_n)$ can be written as \[ F_{\Lambda^\circ}(z_1, \dots , z_n) =
\prod_{i = u, \dots , s}^\leftarrow F_{i}(w_i, w_{s+1}) \cdot F' . \] Again, starting with the ordering
described in Proposition \ref{q-jimtheorem1}, this results in  all
the singularity terms in the product $F'$ remaining tied to the
same triple terms as originally described in that ordering. Hence
$F'$ is regular at $z_1 = z_2 = \dots = z_n \neq 0$. And so $F_{\Lambda^\circ}$
is divisible on the left by
\[ \prod_{i = u, \dots , s}^\leftarrow F_{i}(q^{2c_i(\Lambda^\circ)}, q^{2c_{s+1}(\Lambda^\circ)}).
\]

\emph{(iii)} \quad  If $c_v > 0$ (i.e. $v$ is above the steps)
then $F_{v-u}(w_u, w_v)$ is tied to the singularity
$F_{v-u+1}(w_{u-1}, w_v)$ as a triple term. To
show divisibility by $F_{v-u}(w_u, w_v)$ in this case
we need an alternative expression for $F_{\Lambda^\circ}(z_1, \dots ,
z_n)$ that is regular when $z_1 = z_2 = \dots = z_n \neq 0$. Define a
permutation $\tau$ as follows,

\[ \tau = \prod_{i = u, \dots, s}^\rightarrow \left( \prod_{j= s+1, \dots,
v}^\leftarrow (i j) \right)
\phantom{XXXXXXXXXXXXXXXXXXXXXXXXXXXXXXX}
\]
\[ = \left(
\normalsize \begin{array}{ccccccccccccccccc}
                   1 & 2 & \dots & u-1 & u & u+1 & \dots &  & \dots & \dots  &  & \dots & v-1 & v & v+1 & \dots & n \\
                   1 & 2 & \dots & u-1 & s+1 & s+2 & \dots & v-1 & v & u & u+1 & \dots & s-1 & s & v+1 & \dots & n \\
                 \end{array} \right) \]

From the definition of $C_1$ in (\ref{q-cproduct}) we now define
$C'_1 = \psi_\tau C_1$, where $\psi_\tau$ is a homomorphism such that
\[ \psi_\tau F_{j-i}(w_i,w_j) = F_{j-i}(w_i,w_{\tau j}). \]

%\newpage
%For the rest of this proof we will simply write $f_{ij}$
%instead of $f_{ij}(z_i + c_i, z_j + c_j)$. 
Define $R'_1$ as,
\begin{eqnarray*}
R'_1 & = & \prod_{i= r+2, \dots, u-1}^\leftarrow \left(
\prod_{j=s+1, \dots ,v}^\rightarrow F_{i+j-s-r-1}(w_i,w_j) \right) \cdot \prod_{i=
s+1, \dots, t-1}^\rightarrow \left( \prod_{j=i+1, \dots
,t}^\rightarrow f_{j-i+1}(w_i,w_j) \right) \\
 && \times  \left( \prod_{j=s+1, \dots
,t}^\leftarrow F_{t+1-j}(w_{r+1},w_j) \right) \left( \prod_{j=t+1, \dots
,v}^\rightarrow F_{j-s}(w_{r+1},w_j) \right) \\
   && \times \prod_{i= r+1,
\dots, s-1}^\rightarrow \left( \prod_{j=i+1, \dots ,s}^\rightarrow
F_{j-i+v-s}(w_i,w_j) \right) \cdot \prod_{j= v+1, \dots, n}^\rightarrow
\left( \prod_{i=r+1, \dots ,s}^\rightarrow F_{j-i}(w_i,w_j) \right). \\
\end{eqnarray*}

Finally, define $C'_2$ as, \[ C'_2 = \prod_{j= t+1, \dots,
n}^\rightarrow \left( \prod_{i=s+1, \dots ,t}^\rightarrow F_{j-i}(w_i,w_j)
\right). \]

Then, \[ F_\Lambda(z_1, \dots , z_n) = \prod_{i = u, \dots ,
s}^\leftarrow \left( \prod_{j=s+1, \dots, v}^\rightarrow F_{i+j-s-1}(w_i,w_j)
\right) \cdot C'_1 R'_1 C'_2 R_2 \cdot \prod_{i=3}^d C_iR_i, \]
where $d$ is the number of principal hooks of $\lambda$.

The product $C'_1 R'_1 C'_2 R_2 \cdot \prod C_iR_i$ is regular at
$z_1 = z_2 = \dots z_n \neq 0$ since, as before, for any singularity
$(a,b)$ the terms $F_{i}(w_a,w_b)F_{i-1}(w_{a+1},w_b)$ can be replaced by the triple
$P_{i-1}^\pm F_{i}(w_a,w_b)F_{i-1}(w_{a+1},w_b)$ for some index $i$ -- except in the
expression $R'_1$ where the terms $F_{i-1}(w_a,w_l)F_{i}(w_a,w_b)$ are replaced by
$F_{i-1}(w_a,w_l)F_{i}(w_a,w_b)P_{i-1}^+$, where $l$ is the entry to the
immediate left of $b$. Note that $l = b-1$ when $c_b(\Lambda^\circ) > 1$ and $l =
s+1$ when $c_b(\Lambda^\circ) = 1$. \\
And so by letting $z_1 = z_2 = \dots =z_n \neq 0$ we see that $F_{\Lambda^\circ}$
is divisible on the left by \[ \prod_{i = u, \dots , s}^\leftarrow
\left( \prod_{j=s+1, \dots, v}^\rightarrow F_{i+j-s-1}(q^{2c_i(\Lambda^\circ)}, q^{2c_j(\Lambda)})
\right).
\]
\end{proof}

\begin{proposition}\label{q-jimtheorem3} Suppose the numbers $u < v$ stand next to each
other in the same row of the hook tableau $\Lambda^\circ$ of shape
$\lambda$. Let $r$ be the last entry in the column containing $u$.
If $c_u > 0$ then the element $F_{\Lambda^\circ} \in H_n$ is
divisible on the left by $F_{u}(q^{2c_u(\Lambda^\circ)}, q^{c_v(\Lambda^\circ)}) = T_{u} +
q^{-1}$. If $c_u \leqslant 0$ then the element $F_{\Lambda^\circ} \in
H_n$ is divisible on the left by the product
\[ \prod_{i = u, \dots, r}^\leftarrow \left( \prod_{j= r+1, \dots,
v}^\rightarrow F_{i+j-r-1}(q^{c_i(\Lambda^\circ)}, q^{c_j(\Lambda^\circ)}) \right) \]
\end{proposition}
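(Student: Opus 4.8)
The plan is to prove Proposition \ref{q-jimtheorem3} by exact analogy with the proof of Proposition \ref{q-jimtheorem2}, exploiting the symmetry between rows and columns of the hook tableau. Just as in Proposition \ref{q-jimtheorem2}, I would first use the reduction argument via Propositions \ref{q-stripping} and \ref{q-varphi}: if the statement holds whenever $u$ lies in the first row or first column of $\Lambda^\circ$, then stripping off the first $k$ principal hooks (Proposition \ref{q-stripping}) together with the $\varphi$-invariance of $F_\Lambda T_0^{-1}$ (Proposition \ref{q-varphi}) transports left-divisibility on a subtableau to left-divisibility on $\Lambda^\circ$ itself, shifting the index by $x$ as in the opening paragraph of the previous proof. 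Hence it suffices to treat the case where $u$ sits in the first principal hook. Here $u<v$ are adjacent in the same row, so the singularity in question is $F_{v-u}(w_u,w_v)=T_{v-u}+q^{-1}$ on $\mathcal{H}_\lambda$, which by \eqref{P+} gives the idempotent $P^+$ rather than $P^-$.

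I would then split into the two cases dictated by the sign of $c_u$, mirroring cases (i)--(iii) above. When $c_u>0$, the box $u$ lies strictly above the steps, the factor $F_{v-u}(w_u,w_v)$ equals $T_u+q^{-1}$, and one can simply move this single term to the left of the product $F_{\Lambda^\circ}(z_1,\dots,z_n)$ written in the ordering of Proposition \ref{q-jimtheorem1}; as in case (i), all remaining singularities stay tied to their original triple terms, so the residual factor is regular at $z_1=\dots=z_n\neq 0$, giving divisibility by $T_u+q^{-1}=P^+$ up to scalar. When $c_u\leqslant 0$ the box $u$ lies on or below the steps and the target divisor is the iterated product $\prod_{i=u,\dots,r}^\leftarrow(\prod_{j=r+1,\dots,v}^\rightarrow F_{i+j-r-1})$; here, exactly as in case (ii) and the harder case (iii), I would construct a new admissible ordering of the factors—defining the transposed analogues of $C_1',R_1',C_2'$ by an appropriate permutation $\tau$ (now sweeping the column containing $u$ against the relevant row) and the associated homomorphism $\psi_\tau$—so that the desired prefix product appears on the extreme left while every other singularity can still be completed to a regular triple by inserting the appropriate idempotent $P^\pm$.

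The main obstacle, as before, is case (iii): when the singularity $F_{v-u}(w_u,w_v)$ is itself serving as a triple term for a neighbouring singularity, one cannot merely relocate it, and must instead produce a genuinely new ordering that simultaneously (a) places the full prefix product on the left, and (b) leaves every remaining singularity adjacent to a valid triple term, including the reversed-order replacements $F_{i-1}(w_a,w_l)F_i(w_a,w_b)P_{i-1}^+$ that occur inside the transposed $R_1'$-type block. Verifying that this reordering is legitimate—i.e. obtainable from the original product purely through the relations \eqref{q-triple} and \eqref{q-commute}, with all divisibilities of the prefixes by the relevant $F_{\cdot}(w_a,w_{a+1})$ holding so that inserting idempotents does not change the value—is the delicate bookkeeping step, and it is carried out by the same diagrammatic tracking of indices used in Proposition \ref{q-jimtheorem2}, with rows and columns interchanged and $P^-$ replaced throughout by $P^+$.
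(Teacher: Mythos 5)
Your proposal is correct and matches the paper's intent exactly: the paper omits this proof, stating only that it is ``very similar to that of Proposition \ref{q-jimtheorem2}'', and your row--column transposed argument (reduction via Propositions \ref{q-stripping} and \ref{q-varphi} to the first principal hook, the case split on the sign of $c_u$ mirroring cases (i)--(iii), and the replacement of $P^-$ by $P^+$ with transposed analogues of $C'_1, R'_1, C'_2$) is precisely the intended analogy.
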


We omit the proof of this proposition as it is very similar to
that of Proposition \ref{q-jimtheorem2}.

\begin{lemma}\label{q-divisibilitybyadjacenttransposition} Let $\Lambda$ and $\tilde{\Lambda}$ be tableaux of the same shape such that $k= \Lambda(a,b) = \Lambda(a+1,b)-1$ and $\tilde{k}=\tilde{\Lambda}(a,b) = \tilde{\Lambda}(a+1,b)-1$. Then $F_\Lambda \in H_n$ is divisible on the left by $T_k - q$ if and only if $F_{\tilde{\Lambda}} \in H_n$ is divisible on the left by $T_{\tilde{k}} - q$. \end{lemma}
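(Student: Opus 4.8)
The plan is to reformulate the divisibility hypothesis as an eigenvalue condition, reduce to a single adjacent transposition of entries, and propagate that condition along a chain joining $\Lambda$ to $\tilde\Lambda$ using the intertwining relation (\ref{q-jimtheorem1equation}). First I would put ``divisible on the left by $T_k-q$'' into a shape stable under such intertwining. Since $k=\Lambda(a,b)$ and $k+1=\Lambda(a+1,b)$ lie in the same column, $P^-_k=\frac{-1}{q+q^{-1}}(T_k-q)$ is the idempotent of (\ref{P-}), and using $(T_k-q)^2=-(q+q^{-1})(T_k-q)$ one checks directly that $F_\Lambda$ is divisible on the left by $T_k-q$ exactly when $P^-_kF_\Lambda=F_\Lambda$, equivalently when $T_kF_\Lambda=-q^{-1}F_\Lambda$; that is, divisibility is the same as $F_\Lambda$ lying in the $(-q^{-1})$-eigenspace of left multiplication by $T_k$. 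The same reformulation applies to $\tilde\Lambda$ with $\tilde k$ in place of $k$.

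Next I would connect $\Lambda$ to $\tilde\Lambda$ by a chain of standard tableaux, each obtained from the previous by an adjacent transposition of entries, as in the proof of Proposition \ref{q-jimtheorem1}. For a single step $\Lambda'=\sigma_m\Lambda$, relation (\ref{q-jimtheorem1equation}) evaluated at $z_1=\cdots=z_n\neq 0$ gives $F_\Lambda\,B=A\,F_{\Lambda'}$ with $A=F_m(q^{2c_m(\Lambda)}z,\,q^{2c_{m+1}(\Lambda)}z)$ and $B=F_{n-m}(q^{2c_{m+1}(\Lambda)}z,\,q^{2c_m(\Lambda)}z)$; since both tableaux are standard, $|c_m(\Lambda)-c_{m+1}(\Lambda)|\geqslant 2$, so by (\ref{q-inverse}) both $A$ and $B$ are invertible and $F_\Lambda=AF_{\Lambda'}B^{-1}$. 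The easy steps are those with $|m-k|\geqslant 2$: then $\{m,m+1\}\cap\{k,k+1\}=\emptyset$, so the distinguished domino, hence $k$, is unchanged, and $T_k$ commutes with $A=T_m+(\mathrm{scalar})$ by (\ref{Hecke3}). Moving $T_k$ past $A$ gives $T_kF_\Lambda=-q^{-1}F_\Lambda$ if and only if $T_kF_{\Lambda'}=-q^{-1}F_{\Lambda'}$, so the eigenvalue property, and with it the divisibility, transfers across such a step in both directions.

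The point requiring more care is connectivity together with the steps that do change $k$. Because the transposition $\sigma_k$ interchanging the two domino entries is never standard, no single standard step with $|m-k|\leqslant 1$ can alter the domino; the value $k$ can only change by carrying the domino past a third entry, a move realised as a composite of two adjacent transpositions (a $3$-cycle on the values $k,k+1,k+2$ occupying the two domino cells and one auxiliary cell $x$). Viewing tableaux with a consecutive domino at $(a,b)$ as the linear extensions of the poset obtained by contracting that domino to one element, any two are joined by a sequence of the easy steps above and such domino moves; so it remains only to propagate divisibility across one domino move, converting the $T_k$-eigenvalue property into the $T_{\tilde k}$-one (here $\tilde k=k\pm1$).

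The main obstacle is exactly this domino move, because one of its two constituent transpositions need not be invertible: the auxiliary cell $x$ may have content within $1$ of a domino cell, so the relevant factor of (\ref{q-jimtheorem1equation}) is a non-invertible multiple of an idempotent (or, if the contents coincide, has a pole), and the naive two-step division fails. I would treat it by the mechanism already used in Lemma \ref{q-regular} and Proposition \ref{q-jimtheorem1}: work with the rational function $F_\Lambda(z_1,\dots,z_n)$ before specialising, isolate the three factors indexed by pairs among $\{k,k+1,k+2\}$, and use the triple relation (\ref{q-triple}) together with the idempotent $P^-_k$ to rewrite the factor $F_1(w_k,w_{k+1})=T_k-q$ inside a triple that is regular at $z_1=\cdots=z_n$ and whose left factor is $P^-_k$ (respectively $P^-_{\tilde k}$ after the move). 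Reading off the regular value on each side then transports the eigenvector property from $T_k$ to $T_{\tilde k}$, completing the step and, by the chain, the lemma. It is precisely here that (\ref{q-triple}) and Lemma \ref{q-regular} must be invoked to guarantee regularity, rather than the bare commutation argument available in the easy case.
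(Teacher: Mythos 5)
Your eigenvalue reformulation (divisibility of $F_\Lambda$ on the left by $T_k-q$ is equivalent to $T_kF_\Lambda=-q^{-1}F_\Lambda$), your transfer argument for steps with $|m-k|\geqslant 2$, and your connectivity claim via linear extensions of the contracted poset are all sound, and your overall strategy (a chain of standard tableaux plus the intertwining identity (\ref{q-jimtheorem1equation})) is the same as the paper's. The gap sits exactly where the lemma's content is concentrated: the ``domino move''. You never actually carry out the transfer of divisibility across such a move, and the obstacle you invent to explain why it is hard does not exist. If both constituent transpositions of a domino move join standard tableaux --- which your contracted-poset chain guarantees, since the auxiliary cell $x$ must be incomparable to both domino cells --- then the content of $x$ automatically differs from the contents of both domino cells by at least $2$: in a Young diagram, two boxes whose contents differ by at most $1$ are always comparable (if $(i,j)$ and $(i',j')$ are incomparable, say $i'<i$ and $j'>j$, then $(j'-i')-(j-i)\geqslant 2$). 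Hence by (\ref{q-inverse}) every intertwining factor appearing in the two steps is invertible at the specialisation point; there is no pole, no idempotent factor, and no role for Lemma \ref{q-regular} or for unspecialised rational functions. Since the premise is vacuous, your proposed mechanism (``reading off the regular value on each side'') is not an argument; the eigenvector property still has to be moved from $T_k$ to $T_{\tilde k}$, and nothing in your sketch does that.

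What the domino move genuinely requires is a Yang--Baxter conjugation, which your outline nowhere performs. Write $a=q^{2c_k(\Lambda)}z$, $b=q^{2c_{k+1}(\Lambda)}z=q^{-2}a$, $c=q^{2c_{k+2}(\Lambda)}z$, so that $F_k(a,b)=T_k-q$ and $F_{k+1}(a,b)=T_{k+1}-q$. The two specialised instances of (\ref{q-jimtheorem1equation}) give
\[
F_{\tilde\Lambda}\;=\;F_k(a,c)^{-1}\,F_{k+1}(b,c)^{-1}\,F_\Lambda\,F_{n-k-1}(c,b)\,F_{n-k}(c,a),
\]
and relation (\ref{q-triple}) rearranges to
\[
F_k(a,c)^{-1}\,F_{k+1}(b,c)^{-1}\,F_k(a,b)\;=\;F_{k+1}(a,b)\,F_k(b,c)^{-1}\,F_{k+1}(a,c)^{-1},
\]
so if $F_\Lambda=F_k(a,b)\,F=(T_k-q)F$ then $F_{\tilde\Lambda}=(T_{\tilde k}-q)\,F_k(b,c)^{-1}F_{k+1}(a,c)^{-1}\,F\,F_{n-k-1}(c,b)\,F_{n-k}(c,a)$ with $\tilde k=k+1$, and the converse is symmetric. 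This purely algebraic identity, not a regularity statement, is the missing step. For comparison, the paper organises the same ingredients more efficiently: it takes an \emph{arbitrary} chain of standard tableaux from $\Lambda$ to $\tilde\Lambda$ (no requirement that the domino survive along the chain) and applies (\ref{q-triple}) and (\ref{q-commute}) once, globally, in the form $F_{\tilde k}(\cdots)\cdot\prod_M F_{n-i_M}(\cdots)=\prod_M F_{i_M}(\cdots)\cdot F_k(\cdots)$, i.e.\ conjugating $T_k-q$ by the whole invertible product of intertwiners yields $T_{\tilde k}-q$; combined with the iterated form of (\ref{q-jimtheorem1equation}) this settles both directions at once and avoids your case analysis entirely. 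Your proof can be completed, but only by replacing the regularity sketch at the domino move with such a conjugation computation.
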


\begin{proof}
Let $\sigma$ be the permutation such that $\tilde{\Lambda}=\sigma\cdot\Lambda$.
% Note that $\sigma$ is not necessarily an adjacent transposition. 
There is a decomposition
$\sigma=\sigma_{i_N}\ldots\sigma_{i_1}$ such that for each $M=1, \ldots , N-1$
the tableau $\Lambda_{ M}=\sigma_{i_M}\ldots\sigma_{i_1}\cdot\Lambda$ is
standard. Note that this decomposition is not necessarily reduced.

Denote $F_\Lambda$ by $F_k(q^{2c_k(\Lambda)},q^{2c_{k+1}(\Lambda)}) \cdot F$. Then, by using the relations
(\ref{q-triple}) and (\ref{q-commute}) and  Proposition \ref{q-jimtheorem1}, we have the following chain of equalities:

\[
F_{\tilde{k}}
\bigl( 
q^{2c_{\tilde{k}}(\tilde{\Lambda})}, q^{2c_{\tilde{k}+1}(\tilde{\Lambda})}
\bigr)
\ \cdot\ 
\prod_{M=1, \ldots , N}^{\longleftarrow}
F_{ n- i_M}
\bigl( 
q^{2 c_{i_M+1}(\Lambda_M)}
, 
q^{2 c_{i_M}(\Lambda_M)}
\bigr) \cdot F =  \]
\[\prod_{M=1, \ldots , N}^{\longleftarrow}
F_{ i_M}
\bigl( 
q^{2 c_{i_M}(\Lambda_M)}
, 
q^{2 c_{i_M+1}(\Lambda_M)}
\bigr)
\ \cdot\ 
F_{ k}
\bigl( q^{2c_k(\Lambda)}, q^{2c_{k+1}(\Lambda)}\bigr) \cdot F =\]
\[
\prod_{M=1, \ldots , N}^{\longleftarrow}
F_{ i_M}
\bigl( 
q^{2 c_{i_M}(\Lambda_M)}
, 
q^{2 c_{i_M+1}(\Lambda_M)}
\bigr)
\ \cdot\ F_\Lambda\ =
\]
\[
F_{\tilde{\Lambda}}\ \cdot
\prod_{M=1, \ldots , N}^{\longleftarrow}
F_{ n- i_M}
\bigl( 
q^{2 c_{i_M+1}(\Lambda_M)}
, 
q^{2 c_{i_M}(\Lambda_M)}
\bigr)
\]

Hence divisibility by $T_k - q$ for $F_\Lambda$ implies its counterpart for the tableau
$\tilde{\Lambda}$ and the index $\tilde{k}$, and vice versa.
Here we also use the equalities
\[
F_{ k}
\bigl( q^{2c_k(\Lambda)}, q^{2c_{k+1}(\Lambda)}\bigr)
=T_k-q,
\]\[
F_{\tilde{k}}
\bigl( 
q^{2c_{\tilde{k}}(\tilde{\Lambda})}, q^{2c_{\tilde{k}+1}(\tilde{\Lambda})}
\bigr)
=T_{\tilde{k}}-q.
\]
\end{proof}

\begin{corollary}\label{q-divisibilitycorollary}
If $k=\Lambda(a, b)$ and $k+1=\Lambda(a+1, b)$
then the element $F_\Lambda\in H_n$ is divisible on the left by $T_k-q$. If $k=\Lambda(a, b)$ and $k+1=\Lambda(a, b+1)$
then the element $F_\Lambda\in H_n$ is divisible on the left by 
$T_k+q^{-1}$.
\end{corollary}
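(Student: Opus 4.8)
The plan is to derive both divisibilities from Propositions \ref{q-jimtheorem2} and \ref{q-jimtheorem3}, using Lemma \ref{q-divisibilitybyadjacenttransposition} to pass from convenient reference tableaux to an arbitrary $\Lambda$. I shall describe the column statement in detail; the row statement is entirely parallel, with $T_k+q^{-1}$ and the idempotent $P^+_i$ of (\ref{P+}) playing the roles of $T_k-q$ and $P^-_i$. First I would record what Lemma \ref{q-divisibilitybyadjacenttransposition} provides: left divisibility of $F_\Lambda$ by $T_k-q$, for consecutive entries $k,k+1$ occupying vertically adjacent boxes $(a,b)$ and $(a+1,b)$, depends only on $\lambda$ and on the position $(a,b)$, and not on the particular standard tableau carrying such a pair there. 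Hence it suffices to exhibit, for each admissible position, a single standard tableau with a consecutive vertical pair in $(a,b),(a+1,b)$ whose divisibility is already known, and then to let Lemma \ref{q-divisibilitybyadjacenttransposition} propagate it to all the others.

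For a position on or below the main diagonal the hook tableau $\Lambda^\circ$ serves directly: there $(a,b),(a+1,b)$ lie in the column of the $b$-th principal hook, whose entries run consecutively, so the first case ($c_v<0$) of Proposition \ref{q-jimtheorem2} yields divisibility by $T_u-q$ at once, Proposition \ref{q-stripping} allowing us to begin at the $b$-th hook rather than at the first. Lemma \ref{q-divisibilitybyadjacenttransposition} then delivers the column statement for every standard tableau whose consecutive pair sits on or below the diagonal.

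The crux is the remaining case: vertical dominoes lying strictly above the diagonal. Here $\Lambda^\circ$ carries \emph{non}-consecutive entries (for instance $4,6$ in the boxes $(1,2),(2,2)$ of the shape $(3,3,2)$), so Lemma \ref{q-divisibilitybyadjacenttransposition} cannot be anchored to the hook tableau and a different reference is needed. For these positions I would take the column-superstandard tableau (columns filled top-to-bottom, left-to-right), which does carry consecutive entries in every vertical domino, and establish $T_k-q$ divisibility for it by exactly the reordering argument of Proposition \ref{q-jimtheorem2}: using (\ref{q-triple}) and (\ref{q-commute}) I would bring the factor $F_k(w_k,w_{k+1})$ — which restricts on $\mathcal{H}_\lambda$ to $T_k-q$, since the two boxes are column-adjacent and so their contents differ by $1$ — to the far left, leaving a product that is still regular at $z_1=\dots=z_n\neq0$. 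The main obstacle is precisely this regularity: one must verify that, after the factor has been moved to the left, every surviving singularity can still be completed, via Lemma \ref{q-regular} and the idempotents (\ref{P-}), to a regular triple, and choosing the column-superstandard filling is what makes this reordering mirror the one in Proposition \ref{q-jimtheorem2}.

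Alternatively, and perhaps more cleanly, one may simply invoke the column fusion procedure of \cite{N1}, the column analogue of Proposition \ref{q-jimtheorem1}, which furnishes these divisibilities for the column-superstandard tableau outright. In either case, once one reference tableau per position is in hand, Lemma \ref{q-divisibilitybyadjacenttransposition} again carries the divisibility to every standard tableau of shape $\lambda$, completing the column statement; the row statement follows verbatim from Proposition \ref{q-jimtheorem3} together with the evident row analogue of Lemma \ref{q-divisibilitybyadjacenttransposition}.
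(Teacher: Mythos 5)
Your overall architecture — reduce via Lemma \ref{q-divisibilitybyadjacenttransposition} to one reference tableau per position, and handle positions on or below the diagonal with case (i) of Proposition \ref{q-jimtheorem2} applied to the hook tableau — matches the paper, and you correctly identify the crux: for vertical dominoes strictly above the diagonal the hook tableau carries non-consecutive entries, so the lemma cannot be anchored there. But your treatment of that crux has a genuine gap. Your primary route asserts that the column-superstandard tableau $\Lambda^c$ admits "exactly the reordering argument of Proposition \ref{q-jimtheorem2}". That argument is not portable: it rests on the block decomposition $F_{\Lambda^\circ}=\prod_i C_iR_i$ of Proposition \ref{q-jimtheorem1}, which exists precisely because in the hook tableau the entries of each principal hook are consecutive. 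In $\Lambda^c$ the entries of a principal hook are scattered (for $\lambda=(3,3,2)$ the first hook of $\Lambda^c$ carries $1,2,3,4,7$), yet the relevant subspace is still the \emph{hook} subspace $\mathcal{H}_{\Lambda^c}$, so no analogue of the $C_iR_i$ factorisation is available and the pairing of each singularity with a triple term and idempotent would have to be constructed from scratch. Nothing in the paper supplies this, and it is the entire technical content of such a claim, not a detail.

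Your fallback — invoking the column fusion procedure of \cite{N1} — is circular at this point of the paper. That procedure produces an element as the limit of $F_{\Lambda^c}(z_1,\dots,z_n)$ restricted to the \emph{column} subspace $\mathcal{R}_{(\Lambda^c)'}$, whereas Corollary \ref{q-divisibilitycorollary} concerns the element defined by the limit over $\mathcal{H}_{\Lambda^c}$; that these two limits coincide is part of what Theorem \ref{q-fulltheorem} asserts, and its proof rests on the divisibility properties being established here. The paper closes the gap differently, and without any new regularity argument: in the case $c_v\geqslant0$, Proposition \ref{q-jimtheorem2} already gives left divisibility of $F_{\Lambda^\circ}$ by the whole product (\ref{q-divisibilitycorollaryequation2}), whose \emph{rightmost} factor is exactly $T_k-q$ with $k=u+v-s-1$. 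Writing that product as $F\cdot(T_k-q)$ with $F$ invertible, and using the exchange identity coming from (\ref{q-triple}) and (\ref{q-commute}) in the form $F\cdot F_\Lambda=F_{\Lambda^\circ}\cdot G$, one obtains $F_\Lambda=(T_k-q)\cdot(C'_1R'_1C'_2R_2\prod C_iR_i)\cdot G$ for the explicit standard tableau $\Lambda=\sigma\sigma_k\cdot\Lambda^\circ$, which does have the consecutive pair $k,k+1$ in the boxes $(a,b),(a+1,b)$; Lemma \ref{q-divisibilitybyadjacenttransposition} then propagates this to all standard tableaux. You would need to replace your column-superstandard step by this (or an equivalent) transfer argument for the proof to be complete.
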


\begin{proof}
Due to Lemma \ref{q-divisibilitybyadjacenttransposition}
it suffices to prove the first part of Corollary \ref{q-divisibilitycorollary} for only one 
standard tableau $\Lambda$ of shape $\lambda$. Therefore, using Proposition \ref{q-jimtheorem2} and taking $\tilde{\Lambda}$ to be the hook tableau $\Lambda^\circ$ of shape $\lambda$ we have shown the first part of Corollary \ref{q-divisibilitycorollary} in the case $c_v(\Lambda) < 0$.

Next let $\Lambda^{\circ}(a,b)= u$, $\Lambda^{\circ}(a+1,b) = v$ and $s$ be the last entry in the row containing $u$. Then for $c_v({\Lambda^\circ}) \geqslant 0$ Proposition \ref{q-jimtheorem2} showed that $F_{\Lambda^\circ} \in H_n$ is divisible on the left by 
\begin{equation}\label{q-divisibilitycorollaryequation2} \prod_{i = u, \dots, s}^\leftarrow \left( \prod_{j= s+1, \dots,
v}^\rightarrow F_{i+j-s-1}(q^{2c_i(\Lambda^\circ)}, q^{2c_j(\Lambda^\circ)}) \right) \end{equation}
Put $k=u+v-s-1$,
this is the value of the index $i+j-s-1$ in (\ref{q-divisibilitycorollaryequation2})
when $i=u$ and $j=v$. Let $\Lambda$ be the tableau
such that $\Lambda^\circ$ is obtained from 
the tableau $\sigma_k\cdot\Lambda$ by the permutation
\[
\prod_{i =  u, \ldots, s}^{\longleftarrow}\,
\biggl(\ 
\prod_{j = s+1, \ldots,  v}^{\longrightarrow}
\sigma_{ i+j-s-1} \biggr).
\]
The tableau $\Lambda$ is standard. Moreover, then
$\Lambda(a, b)=k$ and $\Lambda(a+1, b)=k+1$.
Note that the rightmost factor in the product (\ref{q-divisibilitycorollaryequation2}),
corresponding to $i=u$ and $j=v$, is
\[
F_{u+v-s-1}
\bigl( q^{2c_u(\Lambda^\circ)}, q^{2c_{v}(\Lambda^\circ)}\bigr)
=
T_k - q.
\]
Denote by $F$ the product of all factors in (\ref{q-divisibilitycorollaryequation2})
but the rightmost one. Further, denote by $G$ the product obtained
by replacing each factor in $F$
\[
F_{i+j-s-1}
\bigl(q^{2c_i(\Lambda^\circ)}, q^{2c_j(\Lambda^\circ)}\bigr)
\]
respectively by
\[
F_{n-i-j+s+1}
\bigl(q^{2c_j(\Lambda^\circ)}, q^{2c_i(\Lambda^\circ)}\bigr).
\]
The element $F \in H_n$ is invertible, and we have
\[ F \cdot F_\Lambda = F_{\Lambda^\circ} \cdot G = F \cdot (T_k - q) \cdot (C'_1R'_1C'_2R_2 \prod C_iR_i) \cdot G,\] where the final equality is as described in Proposition \ref{q-jimtheorem2}. Therefore the divisibility of 
the element $F_{\Lambda^\circ}$ on the left by the product (\ref{q-divisibilitycorollaryequation2})
will imply the divisibility of the element $F_\Lambda$ on the left by 
$T_k - q$.

This shows the required divisibility for the tableau $\Lambda = \sigma \sigma_k \cdot \Lambda^{\circ}$. Using Lemma \ref{q-divisibilitybyadjacenttransposition} again concludes the proof of the first part of Corollary \ref{q-divisibilitycorollary}. 

The second part of Corollary \ref{q-divisibilitycorollary} may be shown similarly.
\end{proof}

%========================================================================

\section{Generating irreducible representations of $H_n$}

For every standard tableau $\Lambda$ of shape $\lambda$ 
we have defined an element $F_\Lambda$ of the algebra $H_n$. 
Let us now assign to $\Lambda$ another element of $H_n$,
which will be denoted by $G_\Lambda$. 

Let $\rho\in S_n$ be the permutation such 
that $\Lambda=\rho\cdot\Lambda^\circ$, that is
$\Lambda(a, b)=\rho(\Lambda^\circ(a, b))$ for all possible $a$ and $b$. For any $j=1, \ldots , n$ denote by $\mathcal{B}_j$ the subsequence of the sequence
$\rho(1), \ldots ,\rho(n)$ consisting of all $i<j$ such that
$\rho^{-1}(i)<\rho^{-1}(j)$. Let $|\mathcal{B}_j|$ be the length of sequence $\mathcal{B}_j$. 

Consider the rational
function taking values in $H_n$, of the variables $z_1, \ldots , z_n$
\[
\prod_{j=1,\ldots, n}^{\longrightarrow}
\biggl(\ 
\prod_{k=1,\ldots,|\mathcal{B}_j|}^{\longrightarrow}
\ F_{j-k}
\bigl( q^{2c_i(\Lambda)}z_i, q^{2c_j(\Lambda)}z_j\bigr)
\biggr)
\ \quad\textrm{where}\ \quad 
i=\mathcal{B}_j(k).
\]
Denote this rational function by $G_\Lambda(z_1, \ldots , z_n)$.

For any $j=1, \ldots , n$ take the subsequence of the sequence
$\rho(1), \ldots ,\rho(n)$ consisting of all $i<j$ such that
$\rho^{-1}(i)>\rho^{-1}(j)$. Denote by $\mathcal{A}_j$ the result of reversing
this subsequence. Using induction on the length of the element $\rho\in S_n$, one can prove that
\[
F_\Lambda(z_1, \ldots , z_n)\ =\  
G_\Lambda(z_1, \ldots , z_n)\ \ \times
\]\[
\prod_{j=1,\ldots, n}^{\longleftarrow}
\biggl(\ 
\prod_{k=1,\ldots,|\mathcal{A}_j|}^{\longleftarrow}
\ F_{ n-j+k}
\bigl( q^{2c_i(\Lambda)}z_i, q^{2c_j(\Lambda)}z_j\bigr)
\biggr)
\ \quad\textrm{where}\ \quad 
i=\mathcal{A}_j(k).
\]
Hence restriction of $G_\Lambda(z_1, \ldots , z_n)$ 
to the subspace $\mathcal{H}_\Lambda\subset\mathbb{C}(q)^{n}$
is regular on the line $(z_1 = \cdots = z_n \neq 0)$ due to Proposition \ref{q-jimtheorem1}.
The value of that restriction is our element $G_\Lambda\in H_n$ 
by definition. 

Note that $G_{\Lambda^\circ}=F_{\Lambda^\circ}$. Denote by $V_\lambda$ the left ideal in the algebra $H_n$
generated by the element $F_{\Lambda^\circ}$. The elements $G_\Lambda\in H_n$ for all
pairwise distinct standard tableaux $\Lambda$ of shape $\lambda$
form a basis in the vector space $V_\lambda$, \cite{N1}. Let us now consider the left ideal $V_\lambda\subset H_n$ as 
$H_n$-module. Here the algebra $H_n$ acts via left multiplication.

The following theorems are stated without proofs. All proofs can in found in \cite[Section 3]{N1}.

\begin{theorem}\label{C3.6}
{\bf\hskip-6pt.\hskip1pt} 
The $H_n$-modules $V_\lambda$ for different partitions $\lambda$ of $n$
are irreducible and pairwise non-equivalent.
\end{theorem}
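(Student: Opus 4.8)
The plan is to show that, up to a nonzero scalar, each $F_{\Lambda^\circ}$ is a \emph{primitive} idempotent of the semisimple algebra $H_n$; irreducibility of $V_\lambda=H_nF_{\Lambda^\circ}$ and pairwise non-equivalence for different $\lambda$ then follow from standard Wedderburn theory. Recall that for an idempotent $e$ in a semisimple algebra the left ideal $H_ne$ is simple if and only if $e$ is primitive, which is equivalent to $eH_ne=\mathbb{C}(q)\,e$. So the two ingredients I must supply are (a) a scalar $\gamma\in\mathbb{C}(q)^\times$ with $F_{\Lambda^\circ}^2=\gamma F_{\Lambda^\circ}$, giving the idempotent $e_\lambda=\gamma^{-1}F_{\Lambda^\circ}$, and (b) the primitivity of $e_\lambda$.

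For (a), and for the eigenvalue data needed in (b), I would exploit the divisibilities already in hand. By Corollary \ref{q-divisibilitycorollary} together with the quadratic relation (\ref{Hecke1}), left divisibility of $F_{\Lambda^\circ}$ by $T_k-q$ is equivalent to $T_kF_{\Lambda^\circ}=-q^{-1}F_{\Lambda^\circ}$ whenever $k,k+1$ lie in the same column, and left divisibility by $T_k+q^{-1}$ is equivalent to $T_kF_{\Lambda^\circ}=qF_{\Lambda^\circ}$ whenever $k,k+1$ lie in the same row. Thus $F_{\Lambda^\circ}$ is a common left eigenvector for the local generators dictated by $\Lambda^\circ$, with the antisymmetric eigenvalue $-q^{-1}$ along columns and the symmetric eigenvalue $q$ along rows. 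These are exactly the relations cutting out one copy of the Specht-type module, and they let one evaluate $F_{\Lambda^\circ}^2$ by pushing one factor of the product through and collapsing it against these eigen-relations, yielding $\gamma$; that $\gamma\neq 0$ follows because $F_{\Lambda^\circ}\neq 0$ (Proposition \ref{T_0coeff}) in the semisimple $H_n$.

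For primitivity (b) the efficient route is via the Jucys--Murphy elements $x_1,\dots,x_n\in H_n$, which generate a maximal commutative subalgebra. One shows $x_kF_{\Lambda^\circ}=q^{2c_k(\Lambda^\circ)}F_{\Lambda^\circ}$ for all $k$, so that $F_{\Lambda^\circ}$ is a joint eigenvector whose eigenvalues record the content sequence of $\Lambda^\circ$. Two facts then finish the argument: the joint $x$-spectrum occurring on $V_\lambda$ consists precisely of the content sequences $\bigl(q^{2c_1(\Lambda)},\dots,q^{2c_n(\Lambda)}\bigr)$ over standard tableaux $\Lambda$ of shape $\lambda$, each with multiplicity one; and $\dim V_\lambda$ equals the number of such tableaux, since the $G_\Lambda$ form a basis of $V_\lambda$. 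Simplicity of the joint spectrum forces $e_\lambda H_n e_\lambda=\mathbb{C}(q)e_\lambda$, giving irreducibility, while the fact that distinct partitions yield disjoint sets of content sequences gives pairwise non-equivalence, since the resulting Gelfand--Tsetlin (central) character of $V_\lambda$ already determines $\lambda$.

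The main obstacle is the verification underlying (b): establishing the Jucys--Murphy eigenvalue relation and the multiplicity-one structure of the joint spectrum, i.e.\ genuinely controlling $e_\lambda H_n e_\lambda$. This is the representation-theoretic heart, and it is where the divisibilities of Corollary \ref{q-divisibilitycorollary} and the $\varphi$-invariance of $F_{\Lambda^\circ}T_0^{-1}$ (Proposition \ref{q-varphi}) do the real work, the latter supplying the right-handed divisibilities symmetric to the left-handed ones. I note finally that, because Theorem \ref{q-fulltheorem} identifies the hook-fusion element with the element of Theorem \ref{q-MNtheorem}, one may instead simply invoke that theorem (proved in \cite{N1}) to obtain Theorem \ref{C3.6} directly; the scheme above is the self-contained alternative following the content approach of \cite{OV}.
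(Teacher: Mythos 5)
The first thing to say is that the paper itself offers no proof of this theorem: immediately above the statement it declares that the theorems of Section 3 are stated without proofs, all of which can be found in \cite[Section 3]{N1}. So the paper's ``proof'' is a citation, and your closing remark --- obtain Theorem \ref{C3.6} by invoking Theorem \ref{q-MNtheorem}, proved in \cite{N1} --- is essentially what the paper does. One caution about the way you phrase that reduction: within the paper's own logic, the irreducibility and non-equivalence claims of Theorem \ref{q-fulltheorem} are themselves quoted from Theorem \ref{C3.6}, so deducing Theorem \ref{C3.6} \emph{from} Theorem \ref{q-fulltheorem} is circular unless the identification of the hook-fusion element with the element of Theorem \ref{q-MNtheorem} is established independently (in the paper this identification, like everything else in Section 3, is imported from \cite{N1}).

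Your self-contained sketch (primitive idempotent plus Jucys--Murphy spectrum, in the spirit of \cite{OV}) is the right genre of argument, but as written it has two genuine gaps. First, the claim that $\gamma\neq 0$ ``because $F_{\Lambda^\circ}\neq 0$ in the semisimple $H_n$'' is invalid: semisimple algebras contain nonzero nilpotents --- the matrix unit $E_{12}\in M_2(\mathbb{C}(q))$ squares to zero --- so $F_{\Lambda^\circ}\neq 0$ does not rule out $F_{\Lambda^\circ}^2=0$. The standard repair is a trace computation, but note that Proposition \ref{T_0coeff} records the coefficient of $T_0$, not of the identity, so it does not plug directly into the usual trace form $\tau(T_\sigma)=\delta_{\sigma,e}$; some actual work, or an explicit evaluation of $\gamma$ as in \cite{N1}, is needed. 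Second, and more seriously, all of your step (b) is asserted rather than proved: the eigenvalue relations $x_kF_{\Lambda^\circ}=q^{2c_k(\Lambda^\circ)}F_{\Lambda^\circ}$, the multiplicity-one structure of the joint spectrum, and the consequent $e_\lambda H_ne_\lambda=\mathbb{C}(q)e_\lambda$. You flag this yourself as ``the main obstacle,'' and rightly so --- it is the entire content of the theorem. Be aware also that the divisibilities of Corollary \ref{q-divisibilitycorollary} (and their right-handed counterparts via Proposition \ref{q-varphi}) concern only pairs $k,k+1$ that are adjacent in $\Lambda^\circ$; these relations do not by themselves generate the full row/column relations nor the Jucys--Murphy eigenvalue relations, so closing this gap genuinely requires reworking the fusion product, which is in effect reproducing \cite[Section 3]{N1}. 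As it stands, your proposal is a sound plan whose hard steps remain open, together with a fallback citation that coincides with what the paper actually does.
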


Furthermore we have the following proposition about the elements $G_\Lambda$;

\begin{proposition}\label{Vkappa}
The vector $G_\Lambda\in V_\lambda$ belongs to the $H_k$-invariant subspace
in $V_\lambda$, equivalent to the $H_k$-module $V_{\kappa}$ where
the partition $\kappa$ is the shape of the tableau obtained by
removing from $\Lambda$ the entries $k+1, \ldots , n$.
\end{proposition}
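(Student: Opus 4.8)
The plan is to realise the desired copy of $V_\kappa$ by factoring the rational function $G_\Lambda(z_1, \ldots, z_n)$ and then invoking irreducibility. Write $K$ for the standard tableau of shape $\kappa$ obtained by deleting the entries $k+1, \ldots, n$ from $\Lambda$; this is standard because the boxes of a standard tableau carrying the entries $1, \ldots, k$ form a Young diagram, and these boxes carry the same contents as in $\Lambda$, so $c_a(K) = c_a(\Lambda)$ for $a \leqslant k$. In the product defining $G_\Lambda(z_1, \ldots, z_n)$ the outer index $j$ runs over $1, \ldots, n$, and for $j \leqslant k$ every factor $F_{j-k'}$ has index $j - k' \leqslant k-1$ and involves only $z_1, \ldots, z_k$; thus all factors with $j \leqslant k$ lie in $H_k$. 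Collecting them gives
\[
G_\Lambda(z_1, \ldots, z_n) = L(z_1, \ldots, z_k)\cdot \Phi(z_1, \ldots, z_n),
\]
where $L$ is the ordered product of the factors with $j \leqslant k$ and $\Phi$ collects those with $j > k$.

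First I would prove the key identity $L(z_1, \ldots, z_k) = G_K(z_1, \ldots, z_k)$. The subsequences $\mathcal{B}_j$ entering $G_\Lambda$ are governed by the permutation $\rho$ with $\Lambda = \rho \cdot \Lambda^\circ$, whereas those entering $G_K$ are governed by the analogous permutation for $K$ and the hook tableau $K^\circ$ of shape $\kappa$. Since $\rho^{-1}(i) = \Lambda^\circ(\mathrm{box}_\Lambda(i))$, for $j \leqslant k$ both prescriptions compare, for entries $i < j$, the positions of the boxes $\mathrm{box}_\Lambda(i)$ and $\mathrm{box}_\Lambda(j)$ in the hook-reading order. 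Hence the identity reduces to the combinatorial lemma that the hook-reading order of $\lambda$, restricted to the boxes of $\kappa$, coincides with the hook-reading order of $\kappa$. This holds because a box $(a,b)$ belongs to the principal hook numbered $\min(a,b)$ in either diagram, the principal hooks are read in increasing order of this number, and inside a principal hook the leg (read downward) precedes the arm (read rightward) with positions fixed by coordinates; all of this data is intrinsic to the box and independent of the ambient diagram. Granting this, $\mathcal{B}_j = \mathcal{B}_j^{K}$ as ordered subsequences for every $j \leqslant k$, and since $c_a(\Lambda) = c_a(K)$ for $a \leqslant k$ the factors agree term by term, giving $L = G_K$.

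With the factorization in hand I would specialise $z_1 = \cdots = z_n \neq 0$. By Proposition \ref{q-jimtheorem1} the restriction of $G_\Lambda(z_1, \ldots, z_n)$ is regular there, and the two factors specialise to $G_K \in H_k$ and some $\Phi_0 \in H_n$, so that $G_\Lambda = G_K \cdot \Phi_0$. Now $G_K$ lies in the left ideal $H_k F_{K^\circ} \subset H_k$, which is the irreducible $H_k$-module $V_\kappa$ by Theorem \ref{C3.6} applied to $H_k$; indeed $G_K$ is one of its basis vectors, so $H_k G_K = V_\kappa$. Right multiplication by $\Phi_0$ is a homomorphism of left $H_k$-modules, and it carries $V_\kappa = H_k G_K$ onto the $H_k$-submodule $H_k G_\Lambda$ of $V_\lambda$ (note $h\,G_K\Phi_0 = h\,G_\Lambda \in V_\lambda$ for $h \in H_k$, since $V_\lambda$ is a left ideal). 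Because $V_\kappa$ is irreducible the image is either $0$ or isomorphic to $V_\kappa$; and it is nonzero since $G_\Lambda \neq 0$, the elements $G_M$ forming a basis of $V_\lambda$ (consistent with the $T_0$-coefficient computation of Proposition \ref{T_0coeff}). Therefore $H_k G_\Lambda \cong V_\kappa$ is the required $H_k$-invariant subspace, and it contains $G_\Lambda = G_K\Phi_0$.

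The main obstacle is the key identity $L = G_K$, i.e.\ the compatibility of the two hook-reading orders. Everything else is either the bookkeeping of splitting the product or a soft module-theoretic argument; the content lies in checking that deleting the boxes of $\lambda / \kappa$ does not disturb the relative hook-order of the surviving boxes, which is exactly what makes the ``low part'' of $G_\Lambda$ reproduce $G_K$ verbatim. An alternative to the direct combinatorial check, closer in spirit to Lemma \ref{q-divisibilitybyadjacenttransposition}, is to move by standard adjacent transpositions fixing the relative order of $1, \ldots, k$ until $K$ is the hook tableau of shape $\kappa$, where the factorization is transparent, and then to transport the conclusion back.
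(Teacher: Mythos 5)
First, a framing point: the paper itself gives no proof of Proposition \ref{Vkappa} --- it is stated without proof, with all proofs deferred to Section 3 of \cite{N1} --- so your proposal has to be judged on its own merits. Much of it is sound: the splitting of the ordered product $G_\Lambda(z_1,\ldots,z_n)=L(z_1,\ldots,z_k)\cdot\Phi(z_1,\ldots,z_n)$ along the outer index $j\leqslant k$ versus $j>k$, the combinatorial identity $L=G_K$ (the relative hook-reading order of two boxes is indeed intrinsic to their coordinates, so deleting $\lambda/\kappa$ does not disturb it), and the module-theoretic endgame via irreducibility of $V_\kappa$ are all correct \emph{conditional on} having an equality $G_\Lambda=G_K\cdot\Phi_0$ in $H_n$.

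The genuine gap is the sentence ``the two factors specialise to $G_K\in H_k$ and some $\Phi_0\in H_n$.'' The restriction of $\Phi$ to $\mathcal{H}_\Lambda$ is in general \emph{not} regular at $z_1=\cdots=z_n\neq 0$: $\Phi$ contains the singularity factors for every pair $(i,j)$ with $i\leqslant k<j$ lying on the same diagonal of $\Lambda$, and nothing inside $\Phi$ cancels those poles. Concretely, take $\lambda=(2,2)$, $\Lambda=\Lambda^\circ$ (so $\Lambda(1,1)=1$, $\Lambda(2,1)=2$, $\Lambda(1,2)=3$, $\Lambda(2,2)=4$), and $k=3$, so $\kappa=(2,1)$. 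Here $\mathcal{H}_\Lambda=\{z_1=z_2=z_3\}$ and, writing $z_1=z_2=z_3=u$,
\begin{equation*}
\Phi \;=\; F_3(u,z_4)\,F_2(q^{-2}u,z_4)\,F_1(q^{2}u,z_4).
\end{equation*}
As $z_4\to u$ the last two factors tend to $(T_2+q^{-1})(T_1-q)\neq 0$, while the scalar part of $F_3(u,z_4)$ blows up; so $\Phi$ has a genuine pole along the line and no $\Phi_0$ exists. What is true is that the pole is killed upon multiplying on the \emph{left} by the value $G_K=F_K$ of the head: regularity of the full product (Proposition \ref{q-jimtheorem1}) forces $F_K\,(T_2+q^{-1})(T_1-q)=0$. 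That cancellation is exactly the nontrivial content of the fusion procedure (the idempotent--triple mechanism of Lemma \ref{q-regular} and Proposition \ref{q-jimtheorem1}), and it cannot be obtained by specialising the two factors separately. One can salvage your argument by a Laurent expansion --- regularity of $G_K(z_1,\ldots,z_k)\cdot\Phi$ forces $G_K$ to annihilate the polar coefficients of $\Phi$, whence $G_\Lambda=G_K\cdot(\textrm{constant term of }\Phi)\in G_K H_n$ --- but only when the head is \emph{constant} on $\mathcal{H}_\Lambda$, i.e.\ when the entries $1,\ldots,k$ all lie in the first principal hook, as in the example above. When they spread over several hooks, cross terms between the Taylor coefficients of the head and the polar coefficients of $\Phi$ appear, and $G_\Lambda\in G_K H_n$ no longer follows from such soft considerations. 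So the real work lies in this analytic step, not in the identity $L=G_K$, which --- contrary to your closing assessment --- is the easy part; this is presumably why the paper defers the proof to \cite{N1}.
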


The properties of the vector $G_\Lambda$ given 
by Proposition \ref{Vkappa} for $k=1, \ldots , n-1$,
determine this vector 
in $V_\lambda$ uniquely up to a non-zero factor from $\mathbb{C}(q)$. 
These properties can be restated for any irreducible
$H_n$-module $V$ equivalent to $V_\lambda$.
Explicit formulas for the action of the generators $T_1, \ldots , T_{n-1}$
of $H_n$ on the vectors in $V$ determined by these properties,
are known; cf.\ \cite[Theorem 6.4]{M}.
% However, our proof of Theorem \ref{T3.3} here
% based on the results of Section 2, 
% is new 

Setting $q=1$, the algebra $H_n$ specializes to the symmetric group ring
$\mathbb{C}S_n$. The element $T_\sigma\in H_n$ then specializes to the
permutation $\sigma\in S_n$ itself. 
The proof of Proposition \ref{q-jimtheorem1} demonstrates that the coefficients
in the expansion of the element $F_\Lambda\in H_n$ relative to the basis
of the elements $T_\sigma$, are regular at $q=1$ as rational functions
of the parameter $q$. Thus the specialization of the element
$F_\Lambda\in H_n$ at $q=1$ is well defined. The same is true for the
element $G_\Lambda\in H_n$.
The specializations at $q=1$ of the basis vectors $G_\Lambda\in V_\lambda$
form the \emph{Young seminormal basis} in the corresponding
irreducible representation of the group $S_n$.
The action of the generators $\sigma_1, \ldots ,\sigma_{n-1}$ of $S_n$ on the 
vectors of the latter basis was first given by \cite[Theorem IV]{Y2}.
For the interpretation of the elements $F_\Lambda$ and $G_\Lambda$ using
representation theory of the affine Hecke algebra $\widehat{H}_n$, 
see \cite[Section 3]{C2} and references therein.

\end{document}